\shorttitle}
\@nx\MakeUppercase{\the\toks@}}
\patchcmd\@settitle{\uppercasenonmath\@title}{\Large}{}{}
\authors}
\newtheorem{theorem}{Theorem}[section]
\newtheorem{question}{Question}[section]
\newtheorem{definition}{Definition}[section]
\newtheorem{corollary}{Corollary}[section]
\newtheorem{lemma}{Lemma}[section]
\newtheorem{remark}{Remark}[section]
\newtheorem{example}{Example}[section]
\numberwithin{equation}{section}
\newcommand\norm[1]{\left\lVert#1\right\rVert}
\newcommand\aps[1]{\left\lvert#1\right\rvert}
\newcommand\skal[2]{\left\langle #1,#2\right\rangle}
\begin{document}
		\title[On roots of normal operators and extensions of Ando's Theorem]{On roots of normal operators and extensions of Ando's Theorem}
		\keywords{paranormal operators, $k$-quasi-paranormal operators, $k$-paranormal operators, absolute-$k$-paranormal operators}
		
		\subjclass[2020]{ 47B15, 47B20}
		
		\author[H. Stankovi\'c]{Hranislav Stankovi\'c}
		\address{Faculty of Electronic Engineering, University of Ni\v s, Aleksandra Medvedeva 4, Ni\v s, Serbia
		}
		\email{\url{hranislav.stankovic@elfak.ni.ac.rs}}

		\author[C. Kubrusly]{Carlos Kubrusly}
		\address{Catholic University of Rio de Janeiro, 22453-900, Rio de Janeiro, RJ, 
			Brazil
		}
		\email{\url{carlos@ele.puc-rio.br}}


		\date{\today}
		
		\maketitle

		\begin{abstract}
			In this paper, we extend Ando's theorem on paranormal operators, which states that if \( T \in \mathfrak{B}(\mathcal{H}) \) is a paranormal operator and there exists \( n \in \mathbb{N} \) such that \( T^n \) is normal, then \( T \) is normal. We generalize this result to the broader classes of \( k \)-paranormal operators and absolute-\( k \)-paranormal operators. Furthermore, in the case of a separable Hilbert space $\mathcal{H}$, we show that if \( T \in \mathfrak{B}(\mathcal{H}) \) is a \( k \)-quasi-paranormal operator for some \( k \in \mathbb{N} \), and there exists \( n \in \mathbb{N} \) such that \( T^n \) is normal, then \( T \) decomposes as \( T = T' \oplus T'' \), where \( T' \) is normal and \( T'' \) is nilpotent of nil-index at most \( \min\{n,k+1\} \), with either summand potentially absent. 
		\end{abstract}
		
		\bigskip 
		
		\section{Introduction}
		
		\bigskip 
		
		Let $(\mathcal{H},\skal{\,}{\,})$ be a complex Hilbert space, and let $\mathfrak{B}(\mathcal{H})$ denote the algebra of bounded linear operators on $\mathcal{H}$. 
		The null space and the range of an operator $T \in \mathfrak{B}(\mathcal{H})$ are denoted by $\mathcal{N}(T)$ and $\mathcal{R}(T)$, respectively. The adjoint of $T$ is denoted by $T^*$. A closed subspace $\mathcal{L}\subseteq\mathcal{H}$ is a reducing subspace of an operator $T\in\mathfrak{B}(\mathcal{H})$ if it is invariant under both $T$ and $T^*$, i.e., if $T(\mathcal{L})\subseteq \mathcal{L}$ and $T^*(\mathcal{L})\subseteq \mathcal{L}$. An operator $T$  is said to be nilpotent if there exists $n\in\mathbb{N}$ such that $T^n=0$.  The smallest such $n$ (if it exists) is called the nil-index of a nilpotent operator $T$.
		An operator $T$ is said to be positive, in notation $T\geq 0$, if $\skal{Tx}{x}\geq 0$ for all $x\in\mathcal{H}$,  and self-adjoint (Hermitian) if $T=T^*$.
		
		An operator $T$ is said to be normal if $T^*T = TT^*$, while it is said to be pure if it has no nonzero reducing subspace on which
		it is normal. It is well known  that each $T\in\mathfrak{B}(\mathcal{H})$ can be represented  in a unique way as $T=T_n\oplus T_p$,
		where $T_n$ is normal and  $T_p$ is a pure operator (see \cite[Proposition 2.1]{Conway91}).
		The theory of normal operators has been extensively developed, primarily due to the applicability of the Spectral Theorem, which plays a central role in their analysis.
		
		Because of their fundamental importance in operator theory and quantum mechanics, numerous generalizations of normal operators have been introduced over the years. Some of the most prominent classes include:
		
		\begin{itemize}
			\item quasinormal operators: $T$ commutes with $T^*T$, i.e., $TT^*T = T^*T^2$;
			\item subnormal operators: there exist a Hilbert space $\mathcal{L}$ and a normal operator $N \in \mathfrak{B}(\mathcal{H}\oplus\mathcal{L})$ such that
			\[
			N = \begin{bmatrix} T & * \\ 0 & * \end{bmatrix} :
			\begin{pmatrix}
				\mathcal{H} \\
				\mathcal{L}
			\end{pmatrix}
			\to
			\begin{pmatrix}
				\mathcal{H} \\
				\mathcal{L}
			\end{pmatrix};
			\]
			\item hyponormal operators: $TT^* \leq T^*T$;
			\item $p$-hyponormal operators: $(TT^*)^p \leq (T^*T)^p$ for some $0<p\leq 1$;
			\item class $A$ operators: $T^*T \leq \left({T^*}^2 T^2\right)^{1/2}$;
			\item paranormal operators: $\|Tx\|^2 \leq \|T^2x\|\norm{x}$ for all $x \in \mathcal{H}$;
			\item normaloid operators: $r(T) = \|T\|$, where $r(T)$ denotes the spectral radius of $T$. Equivalently, $\norm{T^n}=\norm{T}^n$ for each $n\in\mathbb{N}$.
		\end{itemize}
		
		It is well known that the following (strict) inclusions hold:
		\begin{equation}\label{eq:inclusion_chain}
			\begin{split}
				\text{normal}
				&\subset \text{quasinormal}
				\subset \text{subnormal}
				\subset \text{hyponormal} \\
				&\subset \text{$p$-hyponormal}
				\subset \text{class $A$}
				\subset \text{paranormal} \\
				&\subset \text{normaloid}.
			\end{split}
		\end{equation}

		The classes of subnormal and hyponormal operators were introduced by Halmos in \cite{Halmos50}, while the study of quasinormal operators was first conducted by Brown in \cite{Brown53}. The class of $p$-hyponormal operators was defined as an extension of hyponormal operators in \cite{Xia83} and has since been investigated by many authors; see, for instance, \cite{Aluthge90, Aluthge96}. 
		
		The concepts of paranormal operators and operators of class $A$ were introduced by Istr\v a\cb{t}escu in \cite{Istratescu67} and by Furuta, Ito, and Yamazaki in \cite{FurutaItoYamazaki98}, respectively. Paranormal operators have garnered considerable attention, particularly in view of the results in \cite{Furuta67}, \cite{IstratescuIstratescu67}, and \cite{IstratescuSaitoYoshino66}. For an insightful comparison between paranormal operators and class $A$ operators, see \cite{Ito99}.
		
		For more details on the mentioned classes, we refer the reader to \cite{Furuta01, Kubrusly03}.

		\medskip 
		Several generalizations of the class of paranormal operators have been introduced, as well. More precisely, the following classes have emerged:

		\begin{definition}\label{eq:main_def}
			Let $k\in\mathbb{N}\cup\{0\}$. An operator \( T \in \mathfrak{B}(\mathcal{H}) \) is called 
			
			\begin{itemize}
				\item \( k \)-quasi-paranormal if  
				\begin{equation}\label{eq:quasi_paranormal_def}
					\|T^{k+1}x\|^2 \leq \|T^{k+2}x\|\|T^k x\|, \quad x \in \mathcal{H};
				\end{equation}
				\item \( k \)-paranormal if  
				\begin{equation}\label{eq:k-para_def}
					\norm{Tx}^{k+1}\leq\norm{T^{k+1}x}\norm{x}^k,\quad x\in\mathcal{H};
				\end{equation}
				\item absolute-$k$-paranormal  if  
				\begin{equation}\label{eq:abs-k-para_def}
					\norm{Tx}^{k+1}\leq\norm{|T|^kTx}\norm{x}^k,\quad x\in\mathcal{H}.
				\end{equation}
			\end{itemize}	
		\end{definition}
		Using the fact that $\norm{|T|Tx}=\norm{T^2x}$, for any $T\in\mathfrak{B}(\mathcal{H})$ and $x\in\mathcal{H}$, we note that
		\begin{align*}
			\text{paranormal}&\equiv \text{0-quasi-paranormal}\\&\equiv \text{\( 1 \)-paranormal}\\&\equiv \text{absolute-\( 1 \)-paranormal}.
		\end{align*} 
		
		The class of \( k \)-quasi-paranormal operators was introduced in \cite{GaoLi14}. In the special case when \( k = 1 \), these operators are known as quasi-paranormal, a class already studied in \cite{HanNa13}. The notion of \( k \)-paranormal operators was first mentioned in \cite{IstratescuIstratescu67}, and later studied in more detail in \cite{DuggalKubrusly11, KubruslyDuggal10}. The class of absolute-\( k \)-paranormal operators was defined in \cite{FurutaItoYamazaki98}, where some fundamental properties were also established.
		
		It follows from \cite[Lemma 2]{Furuta67} and \cite[Theorem 1]{IstratescuIstratescu67} (cf. \cite[Proposition 1]{KubruslyDuggal10}) that
		\begin{equation}\label{eq:k-para_chain}
			\text{normal}\subset \cdots\subset 	\text{paranormal} \subset \text{\( k \)-paranormal} \subset \text{normaloid}.
		\end{equation}
		Similarly, \cite[Theorem 2]{FurutaItoYamazaki98} and \cite[Theorem 5]{FurutaItoYamazaki98} yield
		
		\begin{equation}\label{eq:abs-k-para_chain}
			\text{normal}\subset \cdots\subset 	\text{paranormal} \subset \text{absolute-\( k \)-paranormal} \subset \text{normaloid}.
		\end{equation}
		
		On the other hand, the class of \( k \)-quasi-paranormal operators does not fit into the inclusion chain \eqref{eq:inclusion_chain}. Indeed, every nilpotent operator of order \( k +1\) (i.e., satisfying \( T^{k+1} = 0 \)), where $k\in\mathbb{N}$, is \( k \)-quasi-paranormal. However, since there exist nilpotent operators of any order \( k \geq 2 \) that are not normaloids, it follows that
		\[
		\text{\( k \)-quasi-paranormal} \not\subseteq \text{normaloid},\quad k\in\mathbb{N}.
		\]
		However, it is proved in \cite[Corollary 2.5]{GaoLi14} that if $T$ is $k$-quasi-paranormal for some $k\in\mathbb{N}$ and $\norm{T^{n+1}}=\norm{T^n}\norm{T}$ for some $n\geq k$, then $T$ is normaloid. On the other hand, the inclusion
		\[
		\text{paranormal} \subset \text{\( k \)-quasi-paranormal},\quad k\in\mathbb{N},
		\]
		clearly always holds. This follows directly by applying the definition of paranormality to the vector \( T^k x \) in place of \( x \). In fact, we have that
		\begin{equation*}
			\text{\( k \)-quasi-paranormal} \subset \text{\( (k+1) \)-quasi-paranormal}
		\end{equation*}
		for each $k\in\mathbb{N}$. 
		
		The aforementioned classes, along with the inclusion chains \eqref{eq:k-para_chain} and \eqref{eq:abs-k-para_chain}, will provide the basis for the subsequent discussion.

		\bigskip 
		
		\section{The $n$-th root problem}
		
		\bigskip 
		
		An intriguing problem in operator theory is the investigation of conditions under which certain operators are self-adjoint, normal, or belong to one of the other operator classes discussed above. This topic has attracted significant attention from many researchers, as evidenced by \cite{BeckPutnam56, Berberian70, Embry66, Embry70, Stampfli62, Stankovic23_roots, Stankovic23_factors, Stankovic24}. A closely related and natural question is identifying the conditions under which the reverse inclusions in \eqref{eq:inclusion_chain} hold.
		
		One line of investigation into these problems involves spectral properties of operators. For instance, in \cite{Putnam70}, it was shown that if a hyponormal operator has a spectrum of zero area, then the operator must be normal. This result has since been extended to other classes of operators (see \cite{ChoItoh95, Tanahashi04, Xia83}).
		
		Another fruitful approach involves examining powers (or polynomials) of operators that belong to a given class. More precisely, if \( T \in \mathfrak{K} \), where \( \mathfrak{K} \) is one of the classes appearing in \eqref{eq:inclusion_chain}, and if \( T^n \in \mathfrak{C} \) for some \( n \in \mathbb{N} \), where \( \mathfrak{C}\) is a subclass of \(\mathfrak{K} \), one may ask whether it follows that \( T \in \mathfrak{C} \).
		We refer to this as \emph{the $n$-th root problem (for the class $\mathfrak{C}$)}.

		\begin{figure}[h!]
			\begin{tikzpicture}
				
				\draw[thick] (0,0) ellipse (4cm and 2.5cm);
				\node at (-2.8,2.2) {\(\mathfrak{K}\)};
				
				\draw[thick] (-0.5,0) ellipse (2.2cm and 1.5cm);
				\node at (-2.6,1.0) {\(\mathfrak{C}\)};
				
				\node[draw, rectangle, fill=green!20] (Tn) at (-1.2,-0.6) {\(T^n\)};
				
				\node[draw, rectangle, fill=blue!25] (T) at (2.5,0) {\(T\)};
				
				\node[draw, rectangle, fill=blue!5] (T') at (-0.2,0.2) {\(T\)};
				
				\draw[->, thick, dashed] (T) .. controls (1,1) .. (T');
				\node at (0.7,1.0) {?};
			\end{tikzpicture}
			\caption{The $n$-th root problem illustration.}
		\end{figure}
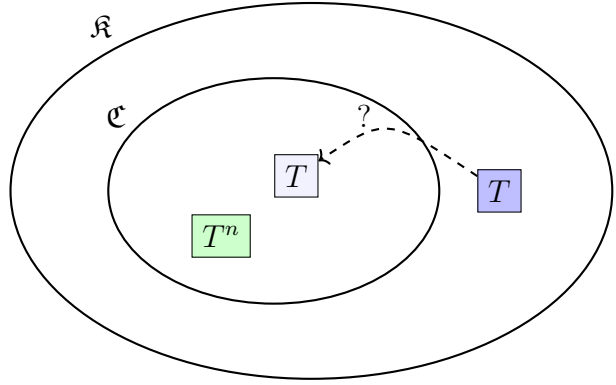

		A pioneering result in this direction is due to Stampfli \cite{Stampfli62}, who proved the following:

		\begin{theorem}\cite[Theorem 5]{Stampfli62}\label{thm:stampfli_thm}
			Let $T\in\mathfrak{B}(\mathcal{H})$ be a hyponormal operator. If there exists $n\in\mathbb{N}$ such that $T^n$ is normal, then $T$ is normal.
		\end{theorem}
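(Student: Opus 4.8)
The plan is to use the normality of $T^n$ to split $\mathcal H$ into $T$-reducing subspaces on which $T^n$ is either $0$ or invertible, and on the invertible pieces to compare $T$ with a normal $n$-th root of $T^n$. First I would invoke Fuglede's theorem: since $T$ commutes with the normal operator $T^n$, it follows that $T(T^n)^*=(T^n)^*T$, and passing to adjoints shows $T^*$ too commutes with $T^n$ and with $(T^n)^*$. Hence both $T$ and $T^*$ lie in the commutant of the abelian von Neumann algebra $\mathcal W$ generated by $T^n$, so every spectral projection $E(\Delta)$ of $T^n$ — indeed, every bounded Borel function of $T^n$ — reduces $T$. Partitioning $\sigma(T^n)\setminus\{0\}$ into the Borel sets $A_m=\sigma(T^n)\cap\{\,2^{-m}\le|w|<2^{-m+1}\,\}$, $m\in\mathbb Z$, one gets an orthogonal decomposition $\mathcal H=E(\{0\})\mathcal H\oplus\bigoplus_m E(A_m)\mathcal H$ into $T$-reducing subspaces. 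On $E(\{0\})\mathcal H$ the operator $T$ is hyponormal with $T^n=0$, hence nilpotent; as hyponormal operators are normaloid by~\eqref{eq:inclusion_chain}, a nilpotent hyponormal operator has spectral radius, hence norm, zero, so $T$ restricts there to the (normal) zero operator.

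Next I would treat $T_m:=T|_{\mathcal H_m}$ on $\mathcal H_m:=E(A_m)\mathcal H$: this is hyponormal, and $T_m^n=T^n|_{\mathcal H_m}$ is normal with spectrum contained in $\overline{A_m}$, which is bounded away from $0$, so $T_m^n$ — and hence $T_m$ — is invertible. Fixing a bounded Borel function $\phi$ with $\phi(w)^n=w$ that is nonvanishing on $\sigma(T_m^n)$, set $N_m:=\phi(T_m^n)$, a normal and invertible operator with $N_m^n=T_m^n$; since $N_m,N_m^*$ belong to $\mathcal W$ (restricted to $\mathcal H_m$), they commute with $T_m$ and with $T_m^*$. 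Then $V_m:=T_mN_m^{-1}$ satisfies $V_m^n=T_m^nN_m^{-n}=I$, and a short computation with these commutations yields
\[
V_m^*V_m-V_mV_m^*=(N_mN_m^*)^{-1}\bigl(T_m^*T_m-T_mT_m^*\bigr),
\]
a product of two commuting positive operators, hence positive; thus $V_m$ is hyponormal. Since $V_m^n=I$, the spectral mapping theorem puts $\sigma(V_m)$ inside the $n$-th roots of unity, so $\|V_m\|=r(V_m)=1$ by normaloidity; the same reasoning applied to the hyponormal operator $V_m^{-1}=V_m^{n-1}$ gives $\|V_m^{-1}\|=1$, whence $\|V_mx\|=\|x\|$ for all $x$, so $V_m$ is an isometry, and being invertible it is unitary.

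Being unitary, $V_m$ is normal, so for $\omega=e^{2\pi i/n}$ the projections $Q_j$ onto $\ker(V_m-\omega^jI)$ are self-adjoint; as $T_m$ and $N_m$ commute with $V_m$ they commute with each $Q_j$, and since $Q_j=Q_j^*$ so do $T_m^*$ and $N_m^*$, so $Q_j\mathcal H_m$ reduces both $T_m$ and $N_m$. On $Q_j\mathcal H_m$ one has $V_m=\omega^jI$, i.e.\ $T_m=\omega^jN_m$, a scalar multiple of the restriction of a normal operator to a reducing subspace, hence normal; summing over $j$ makes $T_m$ normal, and reassembling $E(\{0\})\mathcal H$ with the $\mathcal H_m$ shows $T$ is normal. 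I expect the main obstacle to be the heart of the argument on each $\mathcal H_m$ — recognising $V_m=T_mN_m^{-1}$ as hyponormal via the displayed commutator identity, and upgrading ``hyponormal with $V_m^n=I$'' to ``unitary'' — together with the bookkeeping (the repeated appeals to Fuglede's theorem and the dyadic cutting) needed to keep $0$ out of the relevant spectra and to legitimise the functional-calculus manipulations.
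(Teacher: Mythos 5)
Your proof is correct, but it takes a genuinely different route from the one the paper records for this theorem. The paper treats it as Stampfli's result and recalls the Aluthge--Wang argument: for a ($p$-)hyponormal $T$ one has the chain $((T^n)^*T^n)^{p/n}\ge (T^*T)^p\ge (TT^*)^p\ge (T^n(T^n)^*)^{p/n}$, and normality of $T^n$ forces all inequalities to be equalities, giving $T^*T=TT^*$ in essentially three lines. Your argument instead decomposes $\mathcal H$ via the spectral measure of $T^n$ (legitimised by Fuglede), disposes of the part over $\{0\}$ by normaloidity, and on each dyadic annulus factors $T_m=V_mN_m$ with $N_m$ a normal Borel $n$-th root of $T_m^n$ and $V_m$ a hyponormal solution of $V_m^n=I$, which you upgrade to a unitary and then diagonalise; this is much closer in spirit to Stampfli's original spectral-theorem proof and to the direct-integral technique the paper uses for its main theorems (there over $\sigma(T^n)$, with direct integrals in place of your countable direct sum), and it buys extra structural information (on each piece $T$ is a commuting product of a unitary $n$-th root of $I$ and a normal $n$-th root of $T^n$), at the cost of far more machinery than the inequality argument. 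Two small points to tighten: the assertion that $V_m^{-1}=V_m^{n-1}$ is hyponormal is true but needs a word --- the inverse of an invertible hyponormal operator is hyponormal because inversion is operator anti-monotone, so $T^*T\ge TT^*>0$ yields $(TT^*)^{-1}\ge (T^*T)^{-1}$ --- or you can bypass it entirely via $\norm{V_m^{-1}}=\norm{V_m^{n-1}}\le\norm{V_m}^{n-1}=1$, which together with $\norm{V_m}=1$ already makes the invertible $V_m$ an isometry, hence unitary; and when reassembling, note that the normal summands are restrictions of $T$, hence uniformly bounded, so their infinite orthogonal direct sum is a bounded normal operator.
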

		
		The original proof by Stampfli relied on a technique based on the Spectral Theorem. A more elementary argument, along with an extension, was provided in \cite{AluthgeWang99}. Specifically, the authors in \cite{AluthgeWang99} showed that for any \( p \)-hyponormal operator \( T \in \mathfrak{B}(\mathcal{H}) \) and any \( n \in \mathbb{N} \), the following inequality holds:
		\[
		((T^n)^* T^n)^{p/n} \geq (T^*T)^p \geq (T T^*)^p \geq (T^n (T^n)^*)^{p/n}.
		\]
		Therefore, if in addition \( T^n \) is normal for some \( n \in \mathbb{N} \), then all the inequalities become equalities:
		\[
		((T^n)^* T^n)^{p/n} = (T^*T)^p = (T T^*)^p = (T^n (T^n)^*)^{p/n},
		\]
		which implies that \( T \) must be normal.
		
		\medskip 
		
		In particular, Theorem \ref{thm:stampfli_thm} also applies when hyponormality is replaced by quasinormality or subnormality. Here, we present  direct proofs for the quasinormal case, as it may be useful in other settings (see \cite[Theorem 2.10]{DehimiMortad23} for the unbounded case). We begin by recalling a fundamental characterization of quasinormal operators.

		\begin{theorem}
			\cite{Embry73}\label{thm:embry_char}
			Let $T$ be a bounded operator on $\mathcal{H}$. Then the
			following conditions are equivalent:
			\begin{enumerate}[(i)]
				\item $T$ is quasinormal;
				\item $(T^*)^kT^k=(T^*T)^k$, for each $k\in\mathbb{N}$;
				\item there exists a (unique) spectral Borel measure $E$ on $\mathbb{R}_+=[0,\infty)$ such that
				$$(T^*)^kT^k=\int_{\mathbb{R}_+}x^k\, E(dx),\quad k\in\mathbb{N}.$$
			\end{enumerate}
		\end{theorem}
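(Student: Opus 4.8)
The plan is to prove the cycle $(i)\Rightarrow(ii)\Rightarrow(i)$ together with $(i)\Leftrightarrow(iii)$, the latter equivalence resting on the Spectral Theorem for the bounded positive operator $T^*T$. Throughout I set $A:=T^*T\geq 0$ and use the reformulation of quasinormality as the commutation relation $TA=AT$; taking adjoints, this is also equivalent to $AT^*=T^*A$, whence both $T$ and $T^*$ commute with every power $A^m$, and, by the Spectral Theorem for $A$, with every bounded Borel function of $A$.

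For $(i)\Rightarrow(ii)$ I would argue by induction on $k$, the case $k=1$ being the definition of $A$. Assuming $(T^*)^kT^k=A^k$, one has
\[
(T^*)^{k+1}T^{k+1}=T^*\bigl((T^*)^kT^k\bigr)T=T^*A^kT=T^*TA^k=A^{k+1},
\]
the third equality using $A^kT=TA^k$. For the converse $(ii)\Rightarrow(i)$, the crux is the operator identity
\[
(TA-AT)^*(TA-AT)=AT^*TA-AT^*AT-T^*ATA+T^*A^2T .
\]
Here $AT^*TA=A^3$; from $(ii)$ with $k=2$ one gets $T^*AT=(T^*)^2T^2=A^2$, hence $AT^*AT=T^*ATA=A^3$; and from $(ii)$ with $k=3$ one gets $T^*A^2T=(T^*)^3T^3=A^3$. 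Substituting, the right-hand side equals $A^3-A^3-A^3+A^3=0$, so $TA=AT$ and $T$ is quasinormal.

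It remains to incorporate $(iii)$. If $T$ is quasinormal, then applying the Spectral Theorem to the bounded positive operator $A=T^*T$ yields a spectral Borel measure $E$, supported on $\sigma(A)\subseteq\mathbb{R}_+$, with $A=\int_{\mathbb{R}_+}x\,E(dx)$ and hence $A^k=\int_{\mathbb{R}_+}x^k\,E(dx)$; combined with $(ii)$ this is exactly $(iii)$. Conversely, if such a measure $E$ exists, the case $k=1$ gives $T^*T=\int_{\mathbb{R}_+}x\,E(dx)$, so $E$ is, by the uniqueness of the spectral measure of a self-adjoint operator, the spectral measure of $T^*T$; therefore $(T^*)^kT^k=\int_{\mathbb{R}_+}x^k\,E(dx)=(T^*T)^k$, which is $(ii)$, and $(i)$ follows from the implication already established. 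This argument also gives the uniqueness of $E$ asserted in $(iii)$.

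The only step that requires a genuine idea is $(ii)\Rightarrow(i)$: one must recognize that expanding $(TA-AT)^*(TA-AT)$ and then invoking the hypothesis for the two exponents $k=2$ and $k=3$ forces every term to collapse to $A^3$. All remaining steps are the definition, a one-line induction, or a direct appeal to the Spectral Theorem for a positive operator; in particular, no separability or other structural hypothesis on $\mathcal{H}$ enters here.
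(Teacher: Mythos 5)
Your proposal is correct, but note that the paper offers no proof of this statement at all: Theorem \ref{thm:embry_char} is quoted from Embry's 1973 paper as a known characterization, so there is no internal argument to compare against. Your proof is a valid self-contained substitute. The implication $(i)\Rightarrow(ii)$ by induction and the passage between $(ii)$ and $(iii)$ via the spectral measure of $T^*T$ (including uniqueness of $E$, forced by the $k=1$ equation) are exactly the routine parts, and your key step $(ii)\Rightarrow(i)$ is sound: expanding $(TA-AT)^*(TA-AT)$ with $A=T^*T$ and collapsing each of the four terms to $A^3$ does give $TA=AT$. One small bookkeeping remark: the collapse of the last term is a two-stage substitution, $T^*A^2T=T^*\bigl(T^{*2}T^2\bigr)T=T^{*3}T^3=A^3$, so it uses $(ii)$ with $k=2$ \emph{and} $k=3$, not $k=3$ alone as your sentence suggests; since both are available under hypothesis $(ii)$, this is only an attribution slip, not a gap. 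It is worth observing that your argument actually proves more than the stated equivalence: quasinormality already follows from the two equalities $T^{*2}T^2=(T^*T)^2$ and $T^{*3}T^3=(T^*T)^3$, a sharpening that is itself a known theorem in the recent literature (in the circle of Pietrzycki--Stochel's work cited by the paper), whereas Embry's statement as used here only needs the full family of equalities.
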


        A direct proof for the quasinormal case goes as follows.
		
		\begin{theorem}\label{thm:quasinormal_nth_roots}
			Let $T\in\mathfrak{B}(\mathcal{H})$ be a quasinormal operator. If $T^n$ is normal for some $n\in\mathbb{N}$, then $T$ is normal.
		\end{theorem}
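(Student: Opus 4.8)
The plan is to derive the conclusion from Embry's characterization (Theorem~\ref{thm:embry_char}(ii)) together with the inclusion $\text{quasinormal}\subset\text{hyponormal}$ from \eqref{eq:inclusion_chain}, reducing everything to a statement about the self-commutator of $T$. Write $A:=T^*T$ and $B:=TT^*$. Quasinormality of $T$ is precisely the relation $TA=AT$; taking adjoints gives $AT^*=T^*A$, and hence $AB=BA$, so the self-commutator $D:=A-B$ also commutes with $A$. Since $T$ is hyponormal, $D=T^*T-TT^*\ge 0$, while $B\ge 0$ trivially, so $0\le D\le A$; in particular $\mathcal{N}(A)\subseteq\mathcal{N}(D)$. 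These commutation and positivity facts — in essence just the observation that $A$ commutes with each of $T$, $T^*$, $B$ and $D$ — are the backbone of the argument.

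First I would record what the hypothesis gives: by Theorem~\ref{thm:embry_char}(ii), $(T^n)^*T^n=(T^*)^nT^n=(T^*T)^n=A^n$, so normality of $T^n$ yields $T^n(T^*)^n=(T^n)^*T^n=A^n$. The heart of the argument is then the operator inequality
\[
T^k(T^*)^k \;\le\; A^{k-1}B ,\qquad k\ge 1 ,
\]
which I would prove by induction on $k$. The base case $k=1$ is the identity $TT^*=B$. For the inductive step, write $T^{k+1}(T^*)^{k+1}=T\bigl(T^k(T^*)^k\bigr)T^*$, apply the inductive hypothesis and the monotonicity of the map $X\mapsto TXT^*$, use $TA=AT$ to pull $A^{k-1}$ to the left, and then bound the remaining factor by $TBT^*\le TAT^*=A\,TT^*=AB$ (using $0\le B\le A$ and $TAT^*=(AT)T^*=A\,TT^*$); multiplying through by $A^{k-1}$ gives $A^{k-1}TBT^*\le A^kB$, which is what is needed. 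Each such multiplication of an operator inequality by $A^{k-1}$ is valid only because $A^{k-1}$ commutes with both sides, so the one genuine subtlety is keeping this commutation bookkeeping straight; beyond that, recognizing the right inequality is the only real idea, and I do not expect a deeper obstacle.

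To finish, take $k=n$ and combine with $T^n(T^*)^n=A^n$: this gives $A^n\le A^{n-1}B=A^n-A^{n-1}D$, hence $A^{n-1}D\le 0$; on the other hand $A^{n-1}D=A^{(n-1)/2}D\,A^{(n-1)/2}\ge 0$ because $D\ge 0$ commutes with $A$. Therefore $A^{n-1}D=0$ (when $n=1$ this already reads $D=0$). Taking adjoints, $D\,A^{n-1}=0$, so $\overline{\mathcal{R}(A^{n-1})}\subseteq\mathcal{N}(D)$; since $A\ge 0$ one has $\mathcal{N}(A^{n-1})=\mathcal{N}(A)$ and hence $\overline{\mathcal{R}(A^{n-1})}=\mathcal{N}(A)^{\perp}$. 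Combined with $\mathcal{N}(A)\subseteq\mathcal{N}(D)$ this forces $\mathcal{N}(D)=\mathcal{H}$, i.e. $D=0$, that is $T^*T=TT^*$, so $T$ is normal. (Alternatively, one could invoke the structure theory of quasinormal operators, but the argument above stays within the tools already introduced.)
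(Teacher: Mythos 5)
Your proof is correct: since $A=T^*T$ commutes with $T$ and, by taking adjoints, with $T^*$, it commutes with every word in $T$ and $T^*$, which is precisely what justifies pulling $A^{k-1}$ across $T$ and multiplying your operator inequalities by $A^{k-1}$ (a product of two commuting positive operators is positive, e.g. $A^{k-1}(AB-TBT^*)=A^{k-1}TDT^*\ge 0$ and $A^{n-1}D=A^{(n-1)/2}DA^{(n-1)/2}\ge 0$); the induction gives $T^kT^{*k}\le A^{k-1}B$, and at $k=n$ Embry's identity $(T^*)^nT^n=A^n$ (Theorem \ref{thm:embry_char}) plus normality of $T^n$ squeezes $A^{n-1}D$ between $0$ and $0$, after which your range/kernel decomposition yields $D=0$. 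This is, however, a genuinely different route from either proof in the paper. The paper's first proof uses the Fuglede--Putnam theorem to commute $T^*$ past the normal operator $T^n$ and then descends step by step, showing $T^{n-1}$, $T^{n-2},\dots$ are normal; its second proof extracts from $(T^*T)^n=T^n(T^*)^n$ only the kernel inclusion $\mathcal{N}(T^*)\subseteq\mathcal{N}(T)$ and finishes with the two-line Lemma \ref{lem:kernel_inclusion}. You instead prove a quantitative inequality, $T^kT^{*k}\le (T^*T)^{k-1}TT^*$ for every quasinormal operator, and obtain normality by a single comparison at $k=n$; this avoids Fuglede--Putnam entirely and stays within positivity and commutation arguments, at the cost of more bookkeeping than the paper's shorter arguments. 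One presentational suggestion: state explicitly that $D$ and $TDT^*$ commute with $A$ (hence with its positive powers and square roots), since that commutativity is the only thing making your "multiply by $A^{k-1}$" and "$A^{n-1}D\ge 0$" steps legitimate.
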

		
		\begin{proof}
			Assume that $T^n$ is normal for some $n\in\mathbb{N}$. Since $T^n$ commutes with $T$, by Fuglede-Putnam theorem we have that $T^n$ also commutes with $T^*$, and thus, it commutes with ${T^*}^{n-1}$. Now, using Theorem \ref{thm:embry_char}, we have that
			\begin{equation*}
				T^*T(T^*T)^{n-1}=(T^*T)^n={T^*}^nT^n=T^*T^{n}{T^*}^{n-1}=T^*T(T^{n-1}{T^*}^{n-1}).
			\end{equation*} 
			From here,
			\begin{equation}\label{eq:quasi_zero}
				T^*T\left((T^*T)^{n-1}-T^{n-1}{T^*}^{n-1}\right)=0.
			\end{equation}
			By applying Theorem \ref{thm:embry_char} once again, and taking the adjoint on both sides of \eqref{eq:quasi_zero}, we have that 
			\begin{equation*}
				\left({T^*}^{n-1}T^{n-1}-T^{n-1}{T^*}^{n-1}\right)T^*T=0.
			\end{equation*}
		It  follows that ${T^*}^{n-1}T^{n-1}=T^{n-1}{T^*}^{n-1}$ on $\overline{\mathcal{R}(T^*)}$. Also, since $\mathcal{N}(T)\subseteq\mathcal{N}(T^*)$ (because $T$ is quasinormal, and thus hyponormal), it is clear that ${T^*}^{n-1}T^{n-1}=T^{n-1}{T^*}^{n-1}$ on 	$\mathcal{N}(T)$. Thus, ${T^*}^{n-1}T^{n-1}=T^{n-1}{T^*}^{n-1}$ on $\mathcal{H}$, demonstrating that $T^{n-1}$ is normal. By continuing this process, we may conclude that $T$ is normal.	\end{proof}

		Another, even simpler proof, can be derived from the following observation.
		\begin{lemma}\label{lem:kernel_inclusion}
			Let $T\in\mathfrak{B}(\mathcal{H})$ be a quasinormal operator. If $\mathcal{N}(T^*)\subseteq\mathcal{N}(T)$, then $T$ is normal.
		\end{lemma}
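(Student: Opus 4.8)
The plan is to reduce the statement to the single operator identity $T^{*}(T^{*}T-TT^{*})T=0$, valid for every quasinormal $T$, and then to use the kernel hypothesis to force the self-commutator $C:=T^{*}T-TT^{*}$ to vanish. First I would record two elementary facts. Since quasinormal operators are hyponormal (see \eqref{eq:inclusion_chain}), the inequality $\|T^{*}x\|\le\|Tx\|$ gives $\mathcal{N}(T)\subseteq\mathcal{N}(T^{*})$; together with the assumption $\mathcal{N}(T^{*})\subseteq\mathcal{N}(T)$ this yields $\mathcal{N}(T)=\mathcal{N}(T^{*})$, hence, taking orthogonal complements, $\overline{\mathcal{R}(T)}=\overline{\mathcal{R}(T^{*})}=:\mathcal{M}$. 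Second, left-multiplying the quasinormality relation $TT^{*}T=T^{*}T^{2}$ by $T^{*}$ gives $(T^{*}T)^{2}=(T^{*})^{2}T^{2}$, and therefore
\[
T^{*}(T^{*}T)T=(T^{*})^{2}T^{2}=(T^{*}T)^{2}=T^{*}(TT^{*})T,
\]
i.e. $T^{*}CT=0$.

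Next I would exploit the geometry provided by $\mathcal{N}(T)=\mathcal{N}(T^{*})$. On $\mathcal{M}^{\perp}=\mathcal{N}(T)=\mathcal{N}(T^{*})$ both $T^{*}T$ and $TT^{*}$ vanish, so $Cx=0$ there; moreover $\mathcal{R}(C)\subseteq\overline{\mathcal{R}(T^{*})}+\overline{\mathcal{R}(T)}=\mathcal{M}$. Hence $C=PCP$, where $P$ is the orthogonal projection onto $\mathcal{M}$. On the other hand, $T^{*}CT=0$ means $\langle C\,Tx,Tx\rangle=0$ for every $x\in\mathcal{H}$; since $\{Tx:x\in\mathcal{H}\}$ is dense in $\mathcal{M}$ and $C$ is bounded and self-adjoint, continuity gives $\langle Cu,u\rangle=0$ for all $u\in\mathcal{M}$, hence $\langle PCP\,v,v\rangle=0$ for all $v\in\mathcal{H}$, hence $C=PCP=0$. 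Thus $T^{*}T=TT^{*}$, so $T$ is normal.

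Equivalently, one may first peel off the reducing subspace $\mathcal{M}^{\perp}=\mathcal{N}(T)$ — on which $T$ is the zero operator, hence normal — and work on $\mathcal{M}$, where $T$ restricts to a quasinormal operator that is injective with dense range; there the relation $T^{*}CT=0$ alone forces $C=0$. In either presentation I do not expect a real obstacle: the only point that needs a little care is that the kernel hypothesis, combined with hyponormality, is exactly what makes $\overline{\mathcal{R}(T)}=\overline{\mathcal{R}(T^{*})}$, and this is what allows the identity $T^{*}CT=0$ to be propagated from the dense set $\mathcal{R}(T)$ to the whole of $\mathcal{M}$, and then to all of $\mathcal{H}$ since $C$ already annihilates $\mathcal{M}^{\perp}$ and maps into $\mathcal{M}$.
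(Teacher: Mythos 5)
Your argument is correct, and it shares the paper's overall skeleton (an algebraic consequence of quasinormality kills the self-commutator on $\overline{\mathcal{R}(T)}$, while the kernel hypothesis handles the orthogonal complement $\mathcal{N}(T^*)$), but the mechanism differs in a genuine way. The paper multiplies $TT^*T=T^*TT$ on the \emph{right} by $T^*$, obtaining the operator identity $(TT^*-T^*T)TT^*=0$; since $\overline{\mathcal{R}(TT^*)}=\overline{\mathcal{R}(T)}$, this gives $TT^*=T^*T$ on $\overline{\mathcal{R}(T)}$ outright, and on $\mathcal{N}(T^*)$ both $TT^*$ and (by the hypothesis $\mathcal{N}(T^*)\subseteq\mathcal{N}(T)$) $T^*T$ vanish — no hyponormality and no quadratic-form argument are needed. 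You instead multiply on the \emph{left} by $T^*$ to reach the Embry-type identity $(T^*T)^2=T^{*2}T^2$ and hence the sandwiched relation $T^*CT=0$ with $C=T^*T-TT^*$; this only says the quadratic form of $C$ vanishes on $\mathcal{R}(T)$, so you must add the polarization/self-adjointness step to get $PCP=0$, and you invoke hyponormality to identify $\overline{\mathcal{R}(T)}=\overline{\mathcal{R}(T^*)}$ and conclude $C=PCP$. All of these steps are valid (in fact the hypothesis alone already gives $\overline{\mathcal{R}(T^*)}\subseteq\overline{\mathcal{R}(T)}$, which is all your range argument needs, so the hyponormality input could be dropped), so your proof is a slightly longer but sound variant; the paper's one-sided multiplication buys a shorter route because it produces an operator identity rather than a vanishing quadratic form.
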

		\begin{proof}
			By multiplying the equality $TT^*T=T^*TT$ from the right by $T^*$, we have that $$TT^*TT^*=T^*TTT^*,$$ and thus $TT^*=T^*T$ on $\overline{\mathcal{R}(T)}$. Furthermore, $\mathcal{N}(T^*)\subseteq\mathcal{N}(T)$ clearly implies that $TT^*=T^*T$ on $\mathcal{N}(T^*)$, proving that $T$ is normal.
		\end{proof}		
		
		\begin{proof}[A second proof of Theorem \ref{thm:quasinormal_nth_roots}]
			Due to Theorem \ref{thm:embry_char} and the normality of $T^n$, we have that
			\begin{equation*}
				(T^*T)^n={T^*}^nT^n=T^n{T^*}^n.
			\end{equation*}
			From here,
			\begin{equation*}
				\mathcal{N}(T^*)\subseteq\mathcal{N}({T^*}^n)=\mathcal{N}(T^n{T^*}^n)=\mathcal{N}((T^*T)^n)=\mathcal{N}(T^*T)=\mathcal{N}(T).
			\end{equation*}
			The conclusion now follows from Lemma \ref{lem:kernel_inclusion}.
		\end{proof}
		
		\medskip 
		
		Although it predates the work of Aluthge and Wang \cite{AluthgeWang99}, the following result by Ando \cite{Ando72} extends Theorem \ref{thm:stampfli_thm} to a broader class of operators—namely, the class of paranormal operators.

		\begin{theorem}\cite[Theorem 6]{Ando72}\label{thm:ando_thm}
			Let $T\in\mathfrak{B}(\mathcal{H})$ be a paranormal operator. If there exists $n\in\mathbb{N}$ such that $T^n$ is normal, then $T$ is normal.
		\end{theorem}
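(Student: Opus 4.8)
The plan is to exploit the spectral measure of the normal operator $N:=T^{n}$, decomposing $T$ into small reducing pieces on each of which $N$ is almost a scalar, and then to let the mesh size tend to zero.

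Since $N$ commutes with $T$, the Fuglede--Putnam theorem shows that $N^{*}$ commutes with $T$ as well, so $T$ commutes with every spectral projection $E_{N}(\omega)$ of $N$; hence each $E_{N}(\omega)\mathcal{H}$ reduces $T$. The restriction of a paranormal operator to an invariant subspace is again paranormal (the quantities $\|Tx\|$ and $\|T^{2}x\|$ are unaffected for $x$ in the subspace), hence normaloid; thus $T_{\omega}:=T|_{E_{N}(\omega)\mathcal{H}}$ is normaloid, with $T_{\omega}^{\,n}=N|_{E_{N}(\omega)\mathcal{H}}=:N_{\omega}$ normal and $\sigma(N_{\omega})\subseteq\overline{\omega}$.

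Fix $\varepsilon>0$ and partition $\sigma(N)$ into finitely many Borel sets $\omega_{1},\dots,\omega_{m}$ of diameter less than $\varepsilon$, so that $T=\bigoplus_{i}T_{i}$ with $T_{i}:=T|_{E_{N}(\omega_{i})\mathcal{H}}$. Set $p_{i}:=\min_{z\in\overline{\omega_{i}}}|z|$ and $q_{i}:=\max_{z\in\overline{\omega_{i}}}|z|$, so $q_{i}-p_{i}<\varepsilon$. Normaloidity gives $\|T_{i}\|^{n}=r(T_{i})^{n}=r(N_{i})=\|N_{i}\|\le q_{i}$. If $p_{i}>0$, then $N_{i}$ is invertible with $\|N_{i}^{-1}\|\le p_{i}^{-1}$, so $T_{i}$ is invertible with $T_{i}^{-1}=N_{i}^{-1}T_{i}^{n-1}$, whence $\|T_{i}^{-1}\|\le\|N_{i}^{-1}\|\,\|T_{i}\|^{n-1}\le p_{i}^{-1}q_{i}^{(n-1)/n}$. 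Both $T_{i}^{*}T_{i}$ and $T_{i}T_{i}^{*}$ are positive, are $\le\|T_{i}\|^{2}I$, and (when $p_{i}>0$) are $\ge\|T_{i}^{-1}\|^{-2}I$; hence their difference satisfies $\|T_{i}^{*}T_{i}-T_{i}T_{i}^{*}\|\le\|T_{i}\|^{2}-\|T_{i}^{-1}\|^{-2}\le q_{i}^{2/n}-p_{i}^{2}q_{i}^{-2(n-1)/n}$ when $p_{i}>0$, and $\|T_{i}^{*}T_{i}-T_{i}T_{i}^{*}\|\le\|T_{i}\|^{2}\le q_{i}^{2/n}$ always.

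Finally, $\|T^{*}T-TT^{*}\|=\sup_{i}\|T_{i}^{*}T_{i}-T_{i}T_{i}^{*}\|$, and these bounds tend to $0$ uniformly in $i$ as $\varepsilon\to0$: pieces with $p_{i}<c$ contribute at most $q_{i}^{2/n}\le(c+\varepsilon)^{2/n}$, while on the rest one has $q_{i}^{2/n}-p_{i}^{2}q_{i}^{-2(n-1)/n}\le(p_{i}+\varepsilon)^{2/n}-p_{i}^{2}(p_{i}+\varepsilon)^{-2(n-1)/n}$, an expression that vanishes identically at $\varepsilon=0$ (since $2-2(n-1)/n=2/n$) and is uniformly continuous in $(p_{i},\varepsilon)$ on $[c,\|N\|]\times[0,1]$ for any fixed $c>0$; choosing $c$ small and then $\varepsilon$ small forces $\|T^{*}T-TT^{*}\|$ below any prescribed level, so $T$ is normal. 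The conceptual obstacle is that one cannot simply invert the $n$-th power: a normal operator has far too many $n$-th roots for $T$ to be recoverable as ``the'' $n$-th root of $N$. Paranormality --- as opposed to mere normaloidity --- is indispensable precisely because it is inherited by restrictions to invariant subspaces, which is what makes $\|T_{i}\|^{n}=\|N_{i}\|$ available on every piece; combined with the elementary bound $\|T_{i}^{-1}\|\le\|N_{i}^{-1}\|\,\|T_{i}\|^{n-1}$, this pins $T_{i}^{*}T_{i}$ and $T_{i}T_{i}^{*}$ near a common scalar, and the technical heart is making this pinning uniform as the mesh shrinks.
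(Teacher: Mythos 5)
Your proof is correct, and it takes a genuinely different route from the one the paper relies on. The paper never proves Ando's theorem directly: it is quoted from \cite{Ando72} and recovered as the case $k=1$ of Theorem \ref{thm:main_k_paranormal}, whose proof uses the direct integral decomposition of $\mathcal{H}$ over the von Neumann algebra generated by $T^n$ (first for separable $\mathcal{H}$, then reducing the general case to separable reducing subspaces). In that argument the fibers satisfy $T_\lambda^n=\lambda I_\lambda$ exactly, and Lemma \ref{lem:root_of_scalar} squeezes $\norm{T_\lambda x}$ to the constant $|\lambda|^{1/n}$ to get $T_\lambda^*T_\lambda=|\lambda|^{2/n}I$. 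You replace the continuous fiber decomposition by a finite spectral partition of $\sigma(T^n)$, on each piece of which $T^n$ is only approximately scalar, and compensate with a quantitative estimate: pinning both $T_i^*T_i$ and $T_iT_i^*$ in the operator interval from $\norm{T_i^{-1}}^{-2}I$ to $\norm{T_i}^2I$, whose length goes to $0$ uniformly as the mesh shrinks (with the pieces near the origin handled separately by the crude bound $\norm{T_i}^2\le q_i^{2/n}$). This buys complete independence from direct integral theory and from any separability considerations, at the cost of an $\varepsilon$-limit instead of exact fiberwise normality; the engine common to both proofs is that paranormality, unlike normaloidity, passes to restrictions on invariant subspaces, which is what makes $\norm{T_i}=r(T_i)$ available on every spectral piece. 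The individual steps all check out: Fuglede--Putnam gives that the spectral projections of $T^n$ reduce $T$; $\sigma(N_i)\subseteq\overline{\omega_i}$ yields the bounds on $\norm{N_i}$ and $\norm{N_i^{-1}}$; and the inequality $\norm{A-B}\le\beta-\alpha$ for self-adjoint $A,B$ with $\alpha I\le A,B\le\beta I$ is exactly the elementary fact you need to conclude.
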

		The following example demonstrates that this result does not extend further along the inclusion chain \eqref{eq:inclusion_chain}.
		\begin{example}
			Let $M,N\in\mathfrak{B}(\mathcal{H})\setminus\{0\}$ be such that $M$ is normal, $N^2=0$, and $\norm{N}\leq \norm{M}$. Define $T\in\mathfrak{B}(\mathcal{H}\oplus\mathcal{H})$ as
			\begin{equation*}
				T=\begin{bmatrix}
					M&0\\0&N
				\end{bmatrix}.
			\end{equation*}Then, for each $n\geq 2$,
			\begin{equation*}
				\norm{T^n}=\norm{\begin{bmatrix}
						M^n&0\\0&0
				\end{bmatrix}}=\norm{M^n}=\norm{M}^n=\norm{T}^n,
			\end{equation*}
			showing that $T$ is normaloid. Clearly, $T^n$ is normal, while $T$ is not.
		\end{example}

		\medskip 
		
		 While the $n$-th root problem for normal operators has been explored extensively, a parallel investigation for quasinormal operators has only recently begun. In particular, the study of the $n$-th root problem for the class of quasinormal operators was initiated in \cite{CurtoLeeYoon20}, where the authors considered subnormal square roots of quasinormal operators.

As a simple observation, if \( T \) is subnormal (or even paranormal) and right invertible, and \( T^n \) is quasinormal, for some $n\in\mathbb{N}$, then \( T \) must be normal. Indeed, quasinormality of \( T^n \) implies
\[
({T^*}^n T^n - T^n {T^*}^n) T^n = 0,
\]
and since \( T \) is right invertible, it follows that \( T^n \) is normal.  Consequently, \( T \) is normal (by Theorem \ref{thm:ando_thm}). 

Looking from the other side,  the authors in \cite{CurtoLeeYoon20} established the following result.
		
		\begin{theorem}\cite[Theorem 2.3]{CurtoLeeYoon20}
			Let \( T \in \mathfrak{B}(\mathcal{H}) \) be a left-invertible subnormal operator. If \( T^2 \) is quasinormal, then \( T \) is quasinormal.
		\end{theorem}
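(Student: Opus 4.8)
The plan is to reduce the statement, by way of Embry's characterization (Theorem~\ref{thm:embry_char}), to the single operator identity $T^{*2}T^{2}=(T^{*}T)^{2}$, and to prove that identity by passing to a normal extension of $T$ and comparing scalar moment measures. Once $T^{*2}T^{2}=(T^{*}T)^{2}$ is known, quasinormality of $T$ is immediate from hyponormality.

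First I would fix $C:=T^{*2}T^{2}\ge 0$ and record two parallel descriptions of the numbers $\|T^{m}x\|^{2}$. On one hand, $T^{2}$ is quasinormal, so Embry's theorem applied to $T^{2}$ gives $T^{*2k}T^{2k}=\big((T^{2})^{*}T^{2}\big)^{k}=C^{k}$ for every $k$, hence $\|T^{2k}x\|^{2}=\langle C^{k}x,x\rangle$ for all $x\in\mathcal{H}$. On the other hand, $T$ is subnormal, so it has a bounded normal extension $N$ on some $\mathcal{K}\supseteq\mathcal{H}$ with $N|_{\mathcal{H}}=T$; the spectral theorem for the positive operator $N^{*}N$ then yields, for each $x\in\mathcal{H}$, a compactly supported positive Borel measure $\nu_{x}$ on $[0,\infty)$ with
\[
\|T^{m}x\|^{2}=\|N^{m}x\|^{2}=\langle (N^{*}N)^{m}x,x\rangle=\int t^{m}\,d\nu_{x}(t)\qquad(m\ge 0).
\]

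The heart of the argument is to fuse these two descriptions. Let $\tilde\nu_{x}$ be the push-forward of $\nu_{x}$ under $t\mapsto t^{2}$, and let $\mu_{x}(\cdot)=\langle E_{C}(\cdot)x,x\rangle$ be the scalar spectral measure of $C$ at $x$. Then for every $k$,
\[
\int s^{k}\,d\tilde\nu_{x}(s)=\int t^{2k}\,d\nu_{x}(t)=\|T^{2k}x\|^{2}=\langle C^{k}x,x\rangle=\int s^{k}\,d\mu_{x}(s),
\]
so $\tilde\nu_{x}$ and $\mu_{x}$ are finite positive measures supported on a common compact interval with identical moments; hence $\tilde\nu_{x}=\mu_{x}$ (polynomials are dense in the continuous functions on that interval). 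Changing variables back,
\[
\|Tx\|^{2}=\int t\,d\nu_{x}(t)=\int\sqrt{s}\,d\tilde\nu_{x}(s)=\int\sqrt{s}\,d\mu_{x}(s)=\langle C^{1/2}x,x\rangle
\]
for every $x\in\mathcal{H}$, so $T^{*}T=C^{1/2}$, that is, $T^{*2}T^{2}=(T^{*}T)^{2}$. To conclude, $T$ subnormal is hyponormal, so $D:=T^{*}T-TT^{*}\ge 0$; expanding $T^{*2}T^{2}=(T^{*}T)^{2}$ gives $T^{*}DT=0$, hence $D^{1/2}T=0$, hence $DT=0$, i.e.\ $T^{*}T^{2}=TT^{*}T$, which is exactly the quasinormality of $T$.

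The step I expect to be the genuine obstacle is the fusion of the two moment descriptions: quasinormality of $T^{2}$ controls only the \emph{even} moments $\|T^{2k}x\|^{2}$, and squaring an operator normally destroys information, so a priori nothing about $\|Tx\|^{2}$ need be recoverable. What rescues the argument is precisely subnormality: it makes $m\mapsto\|T^{m}x\|^{2}$ a genuine (Hausdorff) moment sequence, so that knowing all even moments together with positivity of $\nu_{x}$ pins $\nu_{x}$ down completely, and with it $\|Tx\|^{2}$. (I note in passing that the argument above uses only subnormality of $T$ and quasinormality of $T^{2}$; left-invertibility is not actually needed.)
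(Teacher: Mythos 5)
Your proof is correct. Note first that the paper offers no proof of this statement at all: it is quoted from \cite{CurtoLeeYoon20}, so there is no internal argument to compare against. What you prove is in fact stronger than the quoted theorem, since (as you point out) left-invertibility is never used; your argument is precisely the $n=2$ case of Theorem \ref{thm:nth_root_quasi_sub} of Pietrzycki and Stochel \cite{PietrzyckiStochel21}, which the paper cites immediately afterwards as the improvement that removes the left-invertibility hypothesis. Each step checks out: Embry's identity applied to the quasinormal operator $T^2$ gives $T^{*2k}T^{2k}=C^k$ with $C=T^{*2}T^2$, so the even moments $\norm{T^{2k}x}^2$ are the moments of the scalar spectral measure of $C$ at $x$; the normal extension makes $m\mapsto\norm{T^mx}^2$ the moment sequence of a compactly supported positive measure; determinacy of compactly supported moment problems (Weierstrass plus Riesz representation) then forces $T^*T=C^{1/2}$, i.e.\ $T^{*2}T^2=(T^*T)^2$; and combining this with hyponormality of the subnormal $T$ via $T^{*2}T^2-(T^*T)^2=T^*(T^*T-TT^*)T$ yields $(T^*T-TT^*)T=0$, which is quasinormality. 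This moment-determinacy route is in the spirit of the elementary proofs the paper points to (e.g.\ \cite{Stankovic23_roots}), rather than the original Curto--Lee--Yoon argument, which genuinely uses left-invertibility and does not yield the stronger conclusion; the only thing your write-up leaves implicit is the (easy) fact that both measures live on a common compact interval, say $[0,\max\{\norm{N}^4,\norm{T}^4\}]$, which is what licenses the appeal to moment determinacy.
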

		
		Soon after, Pietrzycki and Stochel \cite{PietrzyckiStochel21} removed the left-invertibility assumption and generalized the result to arbitrary powers:
		
		\begin{theorem}\cite[Theorem 1.2]{PietrzyckiStochel21}\label{thm:nth_root_quasi_sub}
			Let \( T \in \mathfrak{B}(\mathcal{H}) \) be a subnormal operator such that \( T^n \) is quasinormal for some \( n \in \mathbb{N} \). Then \( T \) is quasinormal.
		\end{theorem}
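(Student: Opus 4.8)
\emph{Plan.} The idea is to read off the quasinormality of the power $T^n$ as a reducibility statement for the modulus of a normal extension of $T$, and then push it down to $T$ itself. Since $T$ is subnormal, fix a Hilbert space $\mathcal{K}\supseteq\mathcal{H}$ and a normal operator $N\in\mathfrak{B}(\mathcal{K})$ with $N\mathcal{H}\subseteq\mathcal{H}$ and $N|_{\mathcal{H}}=T$ (this is exactly the definition of subnormality). Write $P$ for the orthogonal projection of $\mathcal{K}$ onto $\mathcal{H}$ and set $A:=N^{*}N\ge 0$. As $\mathcal{H}$ is $N$-invariant, $T^{m}=N^{m}|_{\mathcal{H}}$ for every $m$, and since $N$ is normal, $(N^{*})^{m}N^{m}=(N^{*}N)^{m}=A^{m}$; hence for all $x,y\in\mathcal{H}$ one has $\langle (T^{*})^{m}T^{m}x,y\rangle=\langle N^{m}x,N^{m}y\rangle=\langle A^{m}x,y\rangle$, i.e.
\[
(T^{*})^{m}T^{m}=PA^{m}|_{\mathcal{H}},\qquad m\ge 0 .
\]

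Next I would extract the only consequence of the hypothesis that is used. If $S:=T^{n}$ is quasinormal, then $S^{*}S$ commutes with $S$ and, taking adjoints, also with $S^{*}$; consequently $(S^{*})^{2}S^{2}=S^{*}(S^{*}S)S=S^{*}S(S^{*}S)=(S^{*}S)^{2}$ — which is also the case $k=2$ of Embry's characterization (Theorem~\ref{thm:embry_char}). In terms of $T$ this reads $(T^{*})^{2n}T^{2n}=\bigl((T^{*})^{n}T^{n}\bigr)^{2}$. Feeding in the model identity above gives $PA^{2n}|_{\mathcal{H}}=(PA^{n}|_{\mathcal{H}})^{2}$; expanding the square on $\mathcal{H}$ yields $PA^{n}(I-P)A^{n}P=0$, that is $CC^{*}=0$ with $C:=PA^{n}(I-P)$, hence $C=0$. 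Thus $A^{n}(\mathcal{H}^{\perp})\subseteq\mathcal{H}^{\perp}$, so — $A^{n}$ being self-adjoint — $\mathcal{H}$ reduces $A^{n}$; and since $A$ is the unique positive $n$-th root of $A^{n}$ (approximate $t\mapsto t^{1/n}$ by polynomials on $\sigma(A^{n})$), $\mathcal{H}$ reduces $A$ as well.

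Finally I would descend to $T$. By the model identity with $m=1$ and the fact that $\mathcal{H}$ reduces $A$, we get $T^{*}T=PA|_{\mathcal{H}}=A|_{\mathcal{H}}=:A_{0}$. Normality of $N$ gives $NA=AN$ on $\mathcal{K}$; restricting this to $\mathcal{H}$, which is invariant under $N$ and reducing for $A$, yields $TA_{0}=A_{0}T$. Hence $T$ commutes with $T^{*}T$, i.e. $T$ is quasinormal.

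\emph{Where the difficulty sits.} Essentially everything is in the middle step: the nontrivial point is that quasinormality of the \emph{single} power $T^{n}$ already forces $\mathcal{H}$ to reduce the positive part $A=N^{*}N$ of a normal extension. Once the model identity $(T^{*})^{m}T^{m}=PA^{m}|_{\mathcal{H}}$ is available, the implications ``$CC^{*}=0\Rightarrow C=0$'' and ``$\mathcal{H}$ reduces $A^{n}\Rightarrow\mathcal{H}$ reduces $A$'' are routine, and the descent to $T$ uses only that $N$ is normal. It is worth noting that minimality of the normal extension is nowhere needed.
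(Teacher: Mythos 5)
Your argument is correct; note, however, that the paper itself offers no proof of this statement to compare against --- it is quoted from \cite{PietrzyckiStochel21}, with a pointer to \cite{Stankovic23_roots} for an elementary proof, and your normal-extension argument is very much in the spirit of that elementary route. Checking the details: the model identity $(T^*)^mT^m=PA^m|_{\mathcal{H}}$ with $A=N^*N$ is the standard consequence of $(N^*)^mN^m=(N^*N)^m$ for a normal $N$ and the $N$-invariance of $\mathcal{H}$; from quasinormality of $S=T^n$ you use only the easy implication $S(S^*S)=(S^*S)S\Rightarrow (S^*)^2S^2=(S^*S)^2$ (the one direction of Theorem~\ref{thm:embry_char} that is immediate), which is all that is needed; the identity $PA^{2n}|_{\mathcal{H}}=(PA^n|_{\mathcal{H}})^2$ does expand to $CC^*=0$ with $C=PA^n(I-P)$, whence $C=0$, and together with self-adjointness of $A^n$ this gives that $\mathcal{H}$ reduces $A^n$; the continuous functional calculus (uniform polynomial approximation of $t\mapsto t^{1/n}$ on $\sigma(A^n)$) then transfers the reduction to $A=(A^n)^{1/n}$; and finally $T^*T=A|_{\mathcal{H}}$ combined with $NA=AN$ restricted to the $N$-invariant, $A$-reducing subspace $\mathcal{H}$ yields $T(T^*T)=(T^*T)T$, which is precisely the definition of quasinormality. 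Your closing remarks are also accurate: the whole weight of the argument is in showing that quasinormality of a single power forces $\mathcal{H}$ to reduce $N^*N$ (en route you in effect reprove the classical fact that a subnormal $T$ is quasinormal exactly when $\mathcal{H}$ reduces the modulus squared of a normal extension), and minimality of the extension is indeed never used.
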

		
		For an elementary proof of Theorem \ref{thm:nth_root_quasi_sub}, see \cite{Stankovic23_roots}. Additional related results can be found in \cite{Stankovic23_factors}.
		
		In a subsequent paper, Pietrzycki and Stochel extended Theorem \ref{thm:nth_root_quasi_sub} further along the inclusion chain \eqref{eq:inclusion_chain}, proving the following result:
		
		\begin{theorem}\cite[Theorem 1.2]{PietrzyckiStochel23}\label{thm:nth_root_quasi_A}
			Let \( T \in \mathfrak{B}(\mathcal{H}) \) be a class \( A \) operator such that \( T^n \) is quasinormal for some \( n \in \mathbb{N} \). Then \( T \) is quasinormal.
		\end{theorem}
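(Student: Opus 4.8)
The plan is to reduce quasinormality of $T$ to a single operator equation $T^{*}T=C^{2}$, where $C$ is a positive operator built from $T^{n}$, invoking Embry's Theorem~\ref{thm:embry_char} at both ends. Two facts are available immediately. Since $T$ is class $A$ it is paranormal, hence normaloid; feeding the vectors $T^{j}x$ into the defining inequality of paranormality gives $\|Tx\|^{m}\le\|T^{m}x\|\,\|x\|^{m-1}$ (so $\mathcal{N}(T^{m})=\mathcal{N}(T)$ for every $m$) as well as the log-convexity $\|T^{m}x\|^{2}\le\|T^{m+1}x\|\,\|T^{m-1}x\|$. On the other hand, applying Embry's Theorem~\ref{thm:embry_char} to the quasinormal operator $T^{n}$ yields $(T^{*n})^{k}(T^{n})^{k}=(T^{*n}T^{n})^{k}$ for all $k\ge1$; setting $C:=(T^{*n}T^{n})^{1/(2n)}=|T^{n}|^{1/n}$, this says that $(T^{*nk}T^{nk})^{1/(nk)}=C^{2}$ for every $k$.

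Next I would use the monotonicity valid for class $A$ operators, namely that $(T^{*m}T^{m})^{1/m}$ is non-decreasing in $m$:
\[
T^{*}T\ \le\ (T^{*2}T^{2})^{1/2}\ \le\ (T^{*3}T^{3})^{1/3}\ \le\ \cdots\ \le\ (T^{*m}T^{m})^{1/m}\ \le\ \cdots,
\]
whose first step is the definition of class $A$ and whose general step can be derived from the Furuta inequality. Together with the previous paragraph this forces the sequence to equal $C^{2}$ for all $m\ge n$; in particular $T^{*}T\le C^{2}$ and $T^{*(n+1)}T^{n+1}=C^{2(n+1)}$. Write $T^{n}=U\,|T^{n}|=U\,C^{n}$ for the polar decomposition; quasinormality of $T^{n}$ means $U$ commutes with $|T^{n}|=C^{n}$, hence with $C$, so $U^{*}C^{2}U=(U^{*}U)C^{2}=C^{2}$. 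Expanding
\[
C^{2(n+1)}=T^{*(n+1)}T^{n+1}=(T^{n})^{*}(T^{*}T)\,T^{n}=C^{n}\,U^{*}(T^{*}T)U\,C^{n}
\]
and comparing with $C^{2(n+1)}=C^{n}C^{2}C^{n}$ gives $C^{n}\bigl(C^{2}-U^{*}(T^{*}T)U\bigr)C^{n}=0$. Since $C^{2}-U^{*}(T^{*}T)U\ge0$ (using $T^{*}T\le C^{2}$ and $U^{*}C^{2}U=C^{2}$), taking a square root forces $U^{*}(T^{*}T)U=C^{2}$, and conjugating this back by $U$ shows that $T^{*}T$ and $C^{2}$ agree on $\overline{\mathcal{R}(T^{n})}$.

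It then remains to push $T^{*}T=C^{2}$ to all of $\mathcal{H}$: once that is done the monotone chain is constant from $m=1$, hence $(T^{*})^{m}T^{m}=(T^{*}T)^{m}$ for every $m$, and Embry's Theorem~\ref{thm:embry_char} gives quasinormality of $T$. For this I would first split $T^{n}$ into its normal and pure quasinormal parts; one verifies that both summands reduce $T$, so that $T=T_{1}\oplus T_{2}$ with $T_{1}^{n}$ normal, $T_{2}^{n}$ pure quasinormal, and $T_{1},T_{2}$ again class $A$. On the first summand $\overline{\mathcal{R}(T_{1}^{n})}=\mathcal{N}(T_{1}^{n})^{\perp}=\mathcal{N}(T_{1})^{\perp}$, whose complement is $\mathcal{N}(T_{1})$, where $T_{1}^{*}T_{1}=0=C^{2}$; hence $T_{1}^{*}T_{1}=C^{2}$ on all of that summand, and in fact Ando's Theorem~\ref{thm:ando_thm} already makes $T_{1}$ normal.

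The crux is the pure summand $T_{2}$: here $\overline{\mathcal{R}(T_{2}^{n})}$ is a proper subspace, and nothing seen so far controls the operator inequality $T_{2}^{*}T_{2}\le C^{2}$ on $\mathcal{N}(T_{2}^{*n})$. To close this gap I would invoke the rigid structure of a pure quasinormal operator — Brown's model of $T_{2}^{n}$ as a positive operator tensored with the unilateral shift — together with the fact that class $A$ is inherited by restrictions to invariant subspaces, and run the computation above along the decreasing chain $\overline{\mathcal{R}(T_{2}^{n})}\supseteq\overline{\mathcal{R}(T_{2}^{2n})}\supseteq\cdots$ so as to carry the identity across. I expect this transfer in the pure case to be the main obstacle, precisely because no norm or spectral-radius information reverses an operator inequality, so the equality must be extracted from the fine structure of $T^{n}$; a secondary delicate point is the very first reduction, i.e. showing that the normal/pure decomposition of $T^{n}$ reduces $T$, since a priori $T$ is only known to commute with $T^{n}$ and not with $|T^{n}|$.
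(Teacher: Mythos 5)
This statement is quoted in the paper from Pietrzycki--Stochel \cite[Theorem 1.2]{PietrzyckiStochel23} and is not proved there, so your attempt can only be judged on its own terms; as it stands it has two genuine gaps, one of which you acknowledge yourself. The first and most serious is the monotone operator chain $T^*T\le (T^{*2}T^2)^{1/2}\le (T^{*3}T^3)^{1/3}\le\cdots$, asserted for every class $A$ operator and justified only by the remark that it ``can be derived from the Furuta inequality.'' Only the first inequality is the definition of class $A$; the subsequent steps constitute a strong ``powers of class $A$'' statement that is not a standard fact and does not follow routinely from Furuta's inequality. The known Furuta-based power theorems (Aluthge--Wang for $p$-hyponormal, Yamazaki and others for log-hyponormal operators) need hyponormality-type hypotheses and typically yield inequalities with degraded exponents, not a clean $1/m$-chain for an arbitrary, possibly non-invertible, class $A$ operator; behaviour of class $A$ (and even paranormality) under powers is notoriously delicate. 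Since your entire squeeze argument --- forcing $(T^{*m}T^m)^{1/m}=C^2$ for $m\ge n$ and $T^*T\le C^2$, and later converting $T^*T=C^2$ into $(T^*)^mT^m=(T^*T)^m$ so that Theorem \ref{thm:embry_char} applies --- rests on this chain, the proof collapses without a proof of it, and I am not convinced it is true at this level of generality.

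The second gap is the passage from $\overline{\mathcal{R}(T^n)}$ to all of $\mathcal{H}$. Your polar-decomposition computation (which is essentially fine, granting the chain) only gives $T^*T=C^2$ on $\overline{\mathcal{R}(T^n)}$, and nothing controls $T^*T$ versus $C^2$ on $\mathcal{N}(T^{*n})$. The proposed remedy --- decompose $T^n$ into its normal and pure quasinormal parts and claim that this decomposition reduces $T$ --- is unproved: $T$ commutes with $T^n$, but $T^n$ is only quasinormal, so Fuglede--Putnam is not available and commuting with $T^n$ does not give invariance of its reducing subspaces; and even granting the reduction, the pure quasinormal summand, which you yourself call ``the main obstacle,'' is left entirely open. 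So what you have is a plausible plan whose two hardest steps are missing, not a proof; closing them would require genuinely new arguments of the kind developed in \cite{PietrzyckiStochel23}.
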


		\medskip 
		The  \( n \)-th root problem for the class of subnormal operators notably differs from the previous cases. Specifically, as shown in \cite{Stampfli66}, a hyponormal operator \( T \) may have a power \( T^n \) that is subnormal for some \( n \in \mathbb{N} \), while \( T \) itself is not subnormal. This example indicates that the \( n \)-th root problem for subnormal operators has a negative answer with respect to any class further down the inclusion chain \eqref{eq:inclusion_chain}.

		\medskip 
		
		The previous discussion naturally gives rise to the following two questions of interest. 
		
		\begin{question}\label{q:root_problem}
			Let \( T \in \mathfrak{B}(\mathcal{H}) \) be a \( k \)-paranormal (absolute-\( k \)-paranormal) operator for some \( k \in \mathbb{N} \). If \( T^n \) is normal for some \( n \in \mathbb{N} \), does it follow that \( T \) is normal?
		\end{question} 
		
		\begin{question}
			Let \( T \in \mathfrak{B}(\mathcal{H}) \) be a paranormal operator. If \( T^n \) is quasinormal for some \( n \in \mathbb{N} \), does it follow that \( T \) is  quasinormal?
		\end{question} 
		Of course, similar questions can be formulated for other operator classes within the extended chains \eqref{eq:k-para_chain} and \eqref{eq:abs-k-para_chain}.
		
		\bigskip 
		
		\section{Extensions of Ando's Theorem}
		
		\bigskip 
		
		In this section, we provide a positive answer to the Question \ref{q:root_problem}. Specifically, we establish the following results:
		
		\begin{theorem}\label{thm:main_k_paranormal}
			Let $T\in\mathfrak{B}(\mathcal{H})$ be a $k$-paranormal or absolute-$k$-paranormal operator for some $k\in\mathbb{N}$. If there exists $n\in\mathbb{N}$ such that $T^n$ is normal, then $T$ is normal.
		\end{theorem}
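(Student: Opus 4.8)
The plan is to strip off, in three reductions, everything except a ``local'' piece on which $T^n$ is nearly a scalar, and then cash in normaloidity there.

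First I would reduce to the case that $T$ is injective with dense range. Both classes satisfy $\mathcal N(T^m)=\mathcal N(T)$ for every $m\ge 1$: for $k$-paranormal operators this is immediate from \eqref{eq:k-para_def}, and for absolute-$k$-paranormal ones it follows from \eqref{eq:abs-k-para_def} together with $\mathcal N(|T|^k)=\mathcal N(T)$ and $\||T|Tx\|=\|T^2x\|$. Since $T^n$ is normal, $\mathcal N(T)=\mathcal N(T^n)=\mathcal N((T^n)^*)=\mathcal N((T^*)^n)\supseteq\mathcal N(T^*)$, while $\mathcal N(T)^\perp=\overline{\mathcal R(T^n)}$ is $T$-invariant because $T\,\mathcal R(T^n)\subseteq\mathcal R(T^n)$. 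Hence $\mathcal N(T)$ reduces $T$, and replacing $T$ by $T|_{\mathcal N(T)^\perp}$ (still in the same class, still with normal $n$-th power, now injective with dense range) I may assume $T$ is injective with dense range.

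Next, reduce to $T$ invertible. Put $N:=T^n$, a normal operator which is now injective with dense range, and for $\varepsilon>0$ let $P_\varepsilon$ be the spectral projection of $N$ for $\{|z|\ge\varepsilon\}$. Since $T$ commutes with $N$, Fuglede--Putnam gives that $T$ commutes with $N^*$, hence with $W^*(N)\ni P_\varepsilon$; as $P_\varepsilon$ is self-adjoint it reduces $T$. Then $T_\varepsilon:=T|_{\mathcal R(P_\varepsilon)}$ lies in the same class and $T_\varepsilon^{\,n}=N|_{\mathcal R(P_\varepsilon)}$ is normal and bounded below, hence invertible, so $T_\varepsilon$ is invertible. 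Because $P_\varepsilon\uparrow I$ strongly, once the invertible case is settled we get $(T^*T-TT^*)P_\varepsilon=0$ for all $\varepsilon$ and thus $T^*T=TT^*$; so I may assume $T$ invertible. For the invertible case, fix $\varepsilon>0$ and a finite Borel partition $\sigma(N)=\bigsqcup_i\Delta_i$ with $\operatorname{diam}\Delta_i<\varepsilon$; the spectral projections $E_i:=E_N(\Delta_i)$ are mutually orthogonal, sum to $I$, and (again by Fuglede--Putnam) reduce $T$. Each $A_i:=T|_{\mathcal R(E_i)}$ is $k$-paranormal (resp.\ absolute-$k$-paranormal), satisfies $\|A_i\|\le\|T\|$, and $A_i^{\,n}=N|_{\mathcal R(E_i)}$ is normal with $\|A_i^{\,n}-z_iI\|<\varepsilon$ for any chosen $z_i\in\Delta_i$, since the norm of a normal operator equals its spectral radius.

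Thus everything comes down to the local estimate: \emph{if $A$ is $k$-paranormal (resp.\ absolute-$k$-paranormal) with $\|A\|\le M$ and $A^n$ is normal with $\|A^n-z_0I\|\le\delta$ and $|z_0|\ge c_0>0$, then $\|A^*A-AA^*\|\le C\delta$ with $C=C(M,c_0,n)$}; granting this, $\|T^*T-TT^*\|=\sup_i\|A_i^*A_i-A_iA_i^*\|\le C\varepsilon$ for every $\varepsilon>0$ and $T$ is normal. To prove the estimate, set $S:=A/\lambda_0$ with $\lambda_0^n=z_0$: scaling preserves both classes, so $S$ is in the class and hence normaloid, and $\|S^n-I\|\le\delta/c_0$. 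Normaloidity gives $\|S\|=r(S)\le(1+\delta/c_0)^{1/n}$, and from $S^{-1}=S^{\,n-1}(S^n)^{-1}$ with $\|S^{\,n-1}\|\le\|S\|^{\,n-1}$ and $\|(S^n)^{-1}\|\le(1-\delta/c_0)^{-1}$ one gets $\|S^{-1}\|$ within $O(\delta)$ of $1$ as well; hence the positive operators $S^*S$ and $SS^*$ have spectra trapped in $[\|S^{-1}\|^{-2},\|S\|^2]$, an interval of length $O(\delta)$ about $1$, so $\|S^*S-I\|,\|SS^*-I\|=O(\delta)$, and therefore $\|S^*S-SS^*\|=O(\delta)$; multiplying back by $|\lambda_0|^2=|z_0|^{2/n}\le\|T\|^2$ finishes it. The main obstacle is exactly this local estimate — equivalently, the soft case $A^n=z_0I$, where $A/\lambda_0$ is an $n$-th root of $I$ that is normaloid and must therefore be unitary; the essential points are that normaloidity (recorded already in \eqref{eq:k-para_chain}--\eqref{eq:abs-k-para_chain}) simultaneously controls $\|A\|$ from above and, through $A^{-1}=A^{\,n-1}(A^n)^{-1}$, controls $\|A^{-1}\|$, that both classes are stable under scalar multiples and under restriction to reducing subspaces, and that the Fuglede--Putnam step makes the spectral projections of $T^n$ genuinely reduce $T$. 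The only fiddly bookkeeping is keeping the $\|S^{-1}\|$ bound uniform as $\delta\to0$.
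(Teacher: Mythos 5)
Your proposal is correct, and it reaches the conclusion by a genuinely different route from the paper. The paper passes through the direct integral decomposition of the abelian von Neumann algebra generated by $T^n$ (so it must first assume $\mathcal{H}$ separable, reduce to fibers where $T_\lambda^n=\lambda I_\lambda$ exactly, apply its Lemma on roots of scalars, and then remove separability by a separate argument with separable reducing subspaces), whereas you replace the measure-theoretic decomposition by finite spectral partitions of $\sigma(T^n)$ together with a quantitative perturbation of the scalar case: ``$A^n$ within $\delta$ of a scalar forces $\|A^*A-AA^*\|=O(\delta)$.'' This buys you two things. First, there is no separability issue and no ``$\mu$-almost all $\lambda$'' bookkeeping; the argument runs in an arbitrary Hilbert space in one pass. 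Second, your local estimate isolates exactly which features of the two classes are used --- normaloidity, stability under nonzero scalar multiples and under restriction to reducing subspaces, and the kernel identity $\mathcal{N}(T^2)=\mathcal{N}(T)$ --- so both classes are treated uniformly, and in fact the argument proves the abstract statement for any class with these four properties; your control of $\|S^{-1}\|$ via $S^{-1}=S^{n-1}(S^n)^{-1}$ plays the role of the paper's Lemma on $n$-th roots of scalars, and is arguably cleaner since it avoids the two separate norm computations there. (This is consistent with the paper's normaloid counterexample: normaloidity alone is not preserved under restriction to reducing subspaces, which is precisely the hypothesis your partition step consumes.) What you give up is that the paper's direct-integral machinery also yields the finer structural result for $k$-quasi-paranormal operators (Theorem on the normal-plus-nilpotent decomposition), where the fiberwise analysis at $\lambda=0$ is essential; your scheme discards $\mathcal{N}(T)$ at the outset and would not see that part. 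The only points worth writing out carefully are the ones you already flagged: uniformity of the $\|S^{-1}\|$ bound for $\delta\leq c_0/2$, and the fact that $|T|$ restricts correctly to reducing subspaces so that absolute-$k$-paranormality is hereditary.
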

		
		\begin{theorem}\label{thm:main_k_quasi}
			Let $\mathcal{H}$ be a separable Hilbert space and let $T\in\mathfrak{B}(\mathcal{H})$ be a $k$-quasi-paranormal operator for some $k\in\mathbb{N}$. If there exists $n\in\mathbb{N}$ such that $T^n$ is normal, then $T = T'\oplus T''$, where $T'$ is normal and $T''$ is nilpotent of index at most $\min\{n, k+1\}$. (Either summand may be absent.)
		\end{theorem}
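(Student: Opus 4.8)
The plan is to split off a reducing subspace on which $T$ is nilpotent and to show that on the orthogonal complement $T$ is in fact paranormal, so that Ando's Theorem (Theorem~\ref{thm:ando_thm}) finishes the job.

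\emph{Step 1: $\mathcal{N}(T^n)$ reduces $T$.} Since $T^n$ is normal, $\mathcal{N}(T^n)=\mathcal{N}((T^n)^{*})$, and since $T$ commutes with the normal operator $T^n$, the Fuglede--Putnam theorem gives that $T^{*}$ commutes with $T^n$ as well. From $T^n(Tx)=T(T^nx)$ and $T^n(T^{*}x)=T^{*}(T^nx)$ one reads off that $\mathcal{N}(T^n)$ is invariant under both $T$ and $T^{*}$. Writing $\mathcal{H}=\mathcal{N}(T^n)\oplus\mathcal{N}(T^n)^{\perp}$, this yields $T=T''\oplus T'$ with both summands $k$-quasi-paranormal, since the inequality \eqref{eq:quasi_paranormal_def} involves only nonnegative powers of $T$ and is therefore inherited by restrictions to invariant subspaces.

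\emph{Step 2: the index of $T''=T|_{\mathcal{N}(T^n)}$.} Trivially $(T'')^n=0$, so it remains to prove $\mathcal{N}(T^n)\subseteq\mathcal{N}(T^{k+1})$ when $n\ge k+2$. Fix $x\in\mathcal{N}(T^n)$ and put $a_j=\|T^jx\|$. Applying \eqref{eq:quasi_paranormal_def} with $T^{j-1-k}x$ in place of $x$ gives $a_{j}^{2}\le a_{j+1}a_{j-1}$ for every $j\ge k+1$. Since $a_n=0$, a descending induction --- at each step $a_j^{2}\le a_{j+1}a_{j-1}=0$ as soon as $a_{j+1}=0$ --- forces $a_{n-1}=a_{n-2}=\dots=a_{k+1}=0$, hence $T^{k+1}x=0$. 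Thus $T''$ is nilpotent of index at most $\min\{n,k+1\}$.

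\emph{Step 3: $T'=T|_{\mathcal{N}(T^n)^{\perp}}$ is normal.} Here $(T')^{n}=T^{n}|_{\mathcal{N}(T^n)^{\perp}}$ is normal and injective; hence $T'$ is injective, and since $(T')^{n}$ is normal we also have $\mathcal{N}((T'^{*})^{n})=\mathcal{N}(((T')^{n})^{*})=\mathcal{N}((T')^{n})=\{0\}$, so $T'^{*}$ is injective as well and consequently $\mathcal{R}(T')$ --- and therefore $\mathcal{R}(T'^{k})$ for every $k$ --- is dense in $\mathcal{N}(T^n)^{\perp}$. On the dense set $\mathcal{R}(T'^{k})$, applying \eqref{eq:quasi_paranormal_def} to $T'^{k}x$ gives $\|T'(T'^{k}x)\|^{2}\le\|T'^{2}(T'^{k}x)\|\,\|T'^{k}x\|$, i.e.\ $T'$ satisfies the paranormal inequality there; as all three terms are continuous in the vector, $T'$ is paranormal on all of $\mathcal{N}(T^n)^{\perp}$. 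Since $(T')^{n}$ is normal, Theorem~\ref{thm:ando_thm} gives that $T'$ is normal. Combining Steps 2 and 3, $T=T'\oplus T''$ has the asserted form, with $T'$ absent when $T^{n}=0$ and $T''$ absent when $T^{n}$ is injective.

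\emph{Main obstacle.} The two delicate points are: (i) that $\mathcal{N}(T^n)$ is \emph{reducing}, not merely invariant --- plain invariance is not enough, and one genuinely needs Fuglede--Putnam applied to $T^n$; and (ii) upgrading, on $\mathcal{N}(T^n)^{\perp}$, from $k$-quasi-paranormality to honest paranormality so that Ando's Theorem becomes applicable (an alternative here would be the triangular decomposition of $T$ along $\overline{\mathcal{R}(T^k)}\oplus\mathcal{N}(T^{*k})$, but the injectivity reduction above is cleaner). The log-convexity estimate in Step 2, which produces the sharp bound $\min\{n,k+1\}$, is the remaining point that requires care.
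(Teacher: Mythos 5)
Your proof is correct, and it takes a genuinely different route from the paper. The paper assumes $n\ge k$ without loss of generality, passes to the direct integral decomposition $\mathcal{H}=\int^\oplus_{\sigma(T^n)}\mathcal{H}_\lambda\,d\mu(\lambda)$ of the von Neumann algebra generated by $T^n$, uses the positivity characterization of $k$-quasi-paranormality (Theorem~\ref{thm:characterization_ineq}) together with the fact that positivity of a decomposable operator is equivalent to $\mu$-a.e.\ positivity of its fibers to conclude that $\mu$-almost every fiber $T_\lambda$ is $k$-quasi-paranormal with $T_\lambda^n=\lambda I_\lambda$, and then shows each fiber with $\lambda\ne 0$ is paranormal with $T_\lambda^*T_\lambda=|\lambda|^{2/n}I_\lambda$, hence quasinormal and (being invertible) normal; the nilpotent summand is the fiber at $\lambda=0$, handled via \cite[Theorem 2.4]{GaoLi14}. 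You instead split along $\mathcal{H}=\mathcal{N}(T^n)^\perp\oplus\mathcal{N}(T^n)$, which Fuglede--Putnam shows is reducing, control the nilpotency index by the log-convexity recursion $a_j^2\le a_{j+1}a_{j-1}$ (valid for $j\ge k+1$), and on the complement upgrade $k$-quasi-paranormality to paranormality by verifying the paranormal inequality on the dense subspace $\mathcal{R}(T'^{\,k})$ (dense because $(T'^*)^n$ is injective) and passing to the closure by continuity, after which Ando's Theorem~\ref{thm:ando_thm} applies. Each step checks out. What your approach buys is substantial: it is elementary (no direct integral theory) and, more importantly, it nowhere uses separability of $\mathcal{H}$, so it proves the theorem for arbitrary Hilbert spaces and thereby answers affirmatively the question raised in the paper's closing remark about whether the separability hypothesis can be dropped. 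What the paper's approach buys in exchange is the finer fiberwise information $T_\lambda^*T_\lambda=|\lambda|^{2/n}I_\lambda$, which is reused in its proof of Theorem~\ref{thm:main_k_paranormal}; note, though, that even there the paper must append a separate separable-reduction argument, which your method would render unnecessary.
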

		
		Due to the additional technical challenges involved in the proof of Theorem \ref{thm:main_k_quasi}, we will present it in full detail. The proof of Theorem \ref{thm:main_k_paranormal}, on the other hand, follows similar ideas; we will therefore highlight only the key differences.
		
		\medskip 
		
		Before proceeding with the proof of Theorem \ref{thm:main_k_quasi}, we refer the reader to \cite{Nielsen80, Schwartz67} for details on the direct integral decomposition of von Neumann algebras, which will play a crucial role in our argument. We also recall the following characterization of $k$-quasi-paranormal operators.
		
		\begin{theorem}\cite[Theorem 2.1]{GaoLi14}\label{thm:characterization_ineq}
			Let \( T \in \mathfrak{B}(\mathcal{H}) \) and $k\in\mathbb{N}$. Then \( T \) is \( k \)-quasi-paranormal if and only if  
			\begin{equation}\label{eq:charterization_ineq}
				T^{*{k+2}}T^{k+2} - 2\zeta T^{*{k+1}}T^{k+1} + \zeta^2 T^{*k}T^k \geq 0 
			\end{equation}
			for all \( \zeta > 0 \).
		\end{theorem}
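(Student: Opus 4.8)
The plan is to strip the claimed operator inequality down to a scalar statement: for each fixed vector it is the nonnegativity of a quadratic polynomial in $\zeta$, and nonnegativity of that polynomial on $(0,\infty)$ is exactly the defining inequality \eqref{eq:quasi_paranormal_def}.

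First I would observe that for each $\zeta>0$ the operator $A_\zeta := T^{*{k+2}}T^{k+2} - 2\zeta\, T^{*{k+1}}T^{k+1} + \zeta^2 T^{*k}T^k$ is self-adjoint, so $A_\zeta\geq 0$ holds iff $\skal{A_\zeta x}{x}\geq 0$ for every $x\in\mathcal{H}$, and expanding the inner product gives $\skal{A_\zeta x}{x}=\norm{T^{k+2}x}^2-2\zeta\norm{T^{k+1}x}^2+\zeta^2\norm{T^k x}^2=:p_x(\zeta)$. Thus \eqref{eq:charterization_ineq} holds for all $\zeta>0$ iff $p_x(\zeta)\geq 0$ for every $x\in\mathcal{H}$ and every $\zeta>0$; since these quantifiers range over independent variables, it is enough to fix $x$ and study the polynomial $p_x$.

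For fixed $x$, the polynomial $p_x$ has nonnegative leading coefficient $\norm{T^k x}^2$ and nonnegative constant term $\norm{T^{k+2}x}^2$. If $\norm{T^k x}=0$ then $T^k x=0$, hence $T^{k+1}x=T^{k+2}x=0$, so $p_x\equiv 0$ while the inequality $\norm{T^{k+1}x}^2\leq\norm{T^{k+2}x}\,\norm{T^k x}$ also holds trivially---both sides of the asserted equivalence are satisfied. If $\norm{T^k x}>0$ then $p_x$ is an upward parabola with vertex at $\zeta_0=\norm{T^{k+1}x}^2/\norm{T^k x}^2\geq 0$, so $\inf_{\zeta>0}p_x(\zeta)=p_x(\zeta_0)=\norm{T^{k+2}x}^2-\norm{T^{k+1}x}^4/\norm{T^k x}^2$; hence $p_x\geq 0$ on $(0,\infty)$ iff $\norm{T^{k+1}x}^4\leq\norm{T^k x}^2\norm{T^{k+2}x}^2$, i.e.\ (taking square roots of nonnegative quantities) iff $\norm{T^{k+1}x}^2\leq\norm{T^{k+2}x}\,\norm{T^k x}$. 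In either case, ``$p_x(\zeta)\geq 0$ for all $\zeta>0$'' is equivalent to the $k$-quasi-paranormal inequality evaluated at $x$.

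Quantifying this equivalence over all $x\in\mathcal{H}$ and combining with the reduction above yields that \eqref{eq:charterization_ineq} holds for every $\zeta>0$ precisely when $\norm{T^{k+1}x}^2\leq\norm{T^{k+2}x}\norm{T^k x}$ for all $x$, i.e.\ precisely when $T$ is $k$-quasi-paranormal, which is the assertion. There is no deep step here: the argument is the classical discriminant computation for a ``paranormality pencil.'' The part that needs attention (not difficulty) is the bookkeeping of the degenerate configurations---$T^k x=0$, handled via the kernel inclusion $\mathcal{N}(T^k)\subseteq\mathcal{N}(T^{k+1})$, and $\norm{T^{k+1}x}=0$ with $\norm{T^k x}>0$, where the vertex $\zeta_0$ sits on the boundary $\zeta=0$ and $p_x$ is automatically positive on $(0,\infty)$---together with the observation that restricting $\zeta$ to $(0,\infty)$ rather than to all of $\mathbb{R}$ loses nothing exactly because $\zeta_0\geq 0$ always. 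It is worth flagging that the useful feature of this characterization is the \emph{shape} of the pencil $A-2\zeta B+\zeta^2 C\geq 0$: after restricting $T$ to the $T$-invariant subspace $\overline{\mathcal{R}(T^k)}$ (or, on a separable space, to its direct-integral fibers), that pencil turns into the one characterizing ordinary paranormality, which is the mechanism by which it will feed into the proof of Theorem~\ref{thm:main_k_quasi}.
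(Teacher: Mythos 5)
Your proposal is correct: the paper states this result as a citation of \cite[Theorem 2.1]{GaoLi14} without reproducing a proof, and your pointwise quadratic-in-$\zeta$ (discriminant) argument, including the degenerate cases $T^kx=0$ and $\norm{T^{k+1}x}=0$, is exactly the standard proof given in that reference. Nothing further is needed.
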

		
		\begin{proof}[Proof of Theorem \ref{thm:main_k_quasi}]
			Without loss of generality, we may assume that $T^n$ is normal for some $n\geq k$. Let $\mathcal{A}$ denote the abelian von Neumann algebra generated by $T^n$.   By the Spectral Theorem, the Hilbert space $\mathcal{H}$ can be identified with a direct integral
			\[
			\mathcal{H} = \int_{\sigma(T^n)}^\oplus \mathcal{H}_\lambda \, d\mu(\lambda),
			\]
			where each operator in $\mathcal{A}$ acts as a multiplication operator on this decomposition. Specially, 
			\begin{equation*}
				T^n = \int_{\sigma(T^n)}^\oplus \lambda \, d\mu(\lambda).
			\end{equation*}
			Furthermore, the commutant $\mathcal{A}'$ of $\mathcal{A}$ is decomposable relative to this representation. Since $T \in \mathcal{A}'$, it follows that $T$ can be expressed as
			\[
			T = \int_{\sigma(T^n)}^\oplus T_\lambda \, d\mu(\lambda),
			\]
			where $T_\lambda$ is an operator on $\mathcal{H}_\lambda$ for all $\lambda\in\sigma(T^n)$. 
			Therefore, we have that 
			\begin{equation}\label{eq:power_lambda}
				T_\lambda^n = \lambda I_\lambda,
			\end{equation}
			for $\mu$-almost all $\lambda\in\sigma(T^n)$, 
			where $I_\lambda$ denotes the identity operator on Hilbert space $\mathcal{H}_\lambda$, $\lambda\in\sigma(T^n)$. 
			
			Now let $\zeta>0$ be arbitrary. Since $T$ is $k$-quasi-paranormal, it satisfies \eqref{eq:charterization_ineq}, and thus,
			\begin{equation*}
				\int_{\sigma(T^n)}^\oplus\left[T_\lambda^{*{k+2}}T_\lambda^{k+2} - 2\zeta T_\lambda^{*{k+1}}T_\lambda^{k+1} + \zeta^2 T_\lambda^{*k}T_\lambda^k\right]\,d\mu(\lambda)\geq 0.
			\end{equation*}
			By \cite[Proposition 2.6.1]{Nielsen80}, we have that
			\begin{equation}\label{eq:characterization_pointwise}
				T_\lambda^{*{k+2}}T_\lambda^{k+2} - 2\zeta T_\lambda^{*{k+1}}T_\lambda^{k+1} + \zeta^2 T_\lambda^{*k}T_\lambda^k\geq 0,\quad \zeta>0,
			\end{equation}
			for $\mu$-almost all $\lambda\in\sigma(T^n)$. In other words, $T_\lambda$ is $k$-quasi-paranormal for $\mu$-almost all $\lambda\in\sigma(T^n)$, by Theorem \ref{thm:characterization_ineq}.
			
			Let $E$ be a measure zero set such that \eqref{eq:power_lambda} and \eqref{eq:characterization_pointwise} holds on $\sigma(T^n)\setminus E$, and let $\lambda\in \sigma(T^n)\setminus E$ be arbitrary. If $\lambda=0$, then, using \cite[Theorem 2.4]{GaoLi14}, we have that $T_0^{k+1}=0$. Hence, assume that $\lambda\neq 0$. Since $n\geq k$, we have that \eqref{eq:characterization_pointwise} holds for $k=n$, which together with \eqref{eq:power_lambda} implies that
			\begin{equation*}
				|\lambda|^2T_\lambda^{*2}T_\lambda^2-2\zeta |\lambda|^2T_\lambda^*T_\lambda+\zeta^2|\lambda|^2\geq 0,\quad \zeta>0.
			\end{equation*}
			By dividing with $|\lambda|^2\neq 0$, it follows that
			\begin{equation*}\label{eq:paranormal_ineq_pointwise}
				T_\lambda^{*2}T_\lambda^2-2\zeta  T_\lambda^*T_\lambda+\zeta^2 \geq 0,\quad \zeta>0,
			\end{equation*}
			i.e., $T_\lambda$ is paranormal. Let $x_\lambda\in\mathcal{H}_\lambda$ be an arbitrary unit vector. By \cite[Theorem D]{Ito99}, we have that 
			\begin{equation*}
				\norm{T_\lambda x_\lambda}\leq \norm{T_\lambda^2x_\lambda}^\frac{1}{2}\leq \ldots\leq \norm{T_\lambda^nx_\lambda}^\frac{1}{n}\leq \norm{T_\lambda^{n+1}x_\lambda}^\frac{1}{n+1}\leq ...
			\end{equation*}
			In particular, $\norm{T_\lambda x_\lambda}\leq \norm{T_\lambda^nx_\lambda}^\frac{1}{n}$, from where it follows that
			\begin{equation}\label{eq:first_ineq}
				\norm{T_\lambda x_\lambda}\leq |\lambda|^\frac{1}{n},
			\end{equation}
			while $\norm{T_\lambda^nx_\lambda}^\frac{1}{n}\leq \norm{T_\lambda^{n+1}x_\lambda}^\frac{1}{n+1}$ yield
			\begin{equation*}
				|\lambda|^\frac{1}{n}\leq |\lambda|^\frac{1}{n+1}\norm{T_\lambda x_\lambda}^\frac{1}{n+1},
			\end{equation*}
			i.e,
			\begin{equation}\label{eq:second_ineq}
				|\lambda|^\frac{1}{n}\leq \norm{T_\lambda x_\lambda}.
			\end{equation}
			Combining \eqref{eq:first_ineq} and \eqref{eq:second_ineq}, we obtain that
			\begin{equation*}
				T_\lambda^*T_\lambda=|\lambda|^\frac{2}{n}I,
			\end{equation*}
			and consequently, $T_\lambda$ is quasinormal. Since $T_\lambda$ is invertible, we finally conclude that it must be normal. 
			
			Thus, keeping in mind \cite[Proposition 2.6.1]{Nielsen80}, we have that the desired decomposition is $T=T'\oplus T''$, where
			\begin{equation*}
				T'=\int_{\sigma(T^n)\setminus\{0\}}^\oplus T_\lambda\,d\mu(\lambda)\quad 
				\text{ and }\quad T''=T_0.
			\end{equation*}
			This completes the proof.
		\end{proof}
		
		The nilpotent part $T''$ in Theorem \ref{thm:main_k_quasi} can be further elaborated in
		case~of~${n=2}$.
		The crucial result is the following representation theorem of the square roots of normal operators proved by Radjavi and Rosenthal in \cite{RadjaviRosenthal71}. We present it here in a slightly different form.
		\begin{theorem}\cite{RadjaviRosenthal71}\label{thm:normal_square}
			Let $T\in\mathfrak{B}(\mathcal{H})$. The operator $T$ is a square root of a normal operator if and only if 
			\begin{equation*}\label{eq:normal_square_representation}
				T=A\oplus \begin{bmatrix}
					B&C\\
					0&-B
				\end{bmatrix},
			\end{equation*}
			where $A, B$ are normal, $C\geq 0$, $C$ is one-to-one and $BC=CB$. 
		\end{theorem}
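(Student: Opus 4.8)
The plan is to prove the easy implication by a direct computation and the hard one by reducing, via a direct integral over $W^{*}(T^{2})$, to the structure of a single involution (on the nonzero fibres) and a single square-zero operator (on the zero fibre). For the ``if'' direction, from $T=A\oplus\left[\begin{smallmatrix}B&C\\0&-B\end{smallmatrix}\right]$ with $A,B$ normal and $BC=CB$, one computes $\left[\begin{smallmatrix}B&C\\0&-B\end{smallmatrix}\right]^{2}=\left[\begin{smallmatrix}B^{2}&BC-CB\\0&B^{2}\end{smallmatrix}\right]=\left[\begin{smallmatrix}B^{2}&0\\0&B^{2}\end{smallmatrix}\right]$, so $T^{2}=A^{2}\oplus B^{2}\oplus B^{2}$ is normal (the positivity of $C$ is not used here, being produced only in the converse, by a polar decomposition). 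For the converse, let $N:=T^{2}$ be normal. Since $T$ commutes with $N$, Fuglede's theorem shows $T$, and hence $T^{*}$, commutes with $W^{*}(N)$, in particular with the spectral projection of $N$ onto $\mathcal{N}(N)=\mathcal{N}(T^{2})$; thus $\mathcal{N}(T^{2})$ reduces $T$, and writing $T=T_{0}\oplus T_{1}$ accordingly we have $T_{0}^{2}=0$, while $T_{1}$ and $N_{1}:=T_{1}^{2}$ are injective with $N_{1}$ normal. The square-zero summand $T_{0}$ is treated directly: $\mathcal{R}(T_{0})\subseteq\mathcal{N}(T_{0})=\overline{\mathcal{R}(T_{0}^{*})}^{\perp}$ forces $\overline{\mathcal{R}(T_{0})}\perp\overline{\mathcal{R}(T_{0}^{*})}$, and using the unitary factor of the polar decomposition of the quasi-affinity $T_{0}\colon\overline{\mathcal{R}(T_{0}^{*})}\to\overline{\mathcal{R}(T_{0})}$ to identify its domain with its codomain exhibits $T_{0}\cong\left[\begin{smallmatrix}0&C_{0}\\0&0\end{smallmatrix}\right]\oplus 0$ with $C_{0}\geq 0$ injective, which is already of the required form with $B$-entry $0$.

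It remains to analyse $T:=T_{1}$, injective with $N=T^{2}$ injective and normal. Since $W^{*}(N)$ is abelian, I pass, exactly as in the proof of Theorem \ref{thm:main_k_quasi}, to a direct integral $\mathcal{H}=\int_{\sigma(N)}^{\oplus}\mathcal{H}_{\lambda}\,d\mu(\lambda)$ with $N=\int^{\oplus}\lambda I_{\lambda}\,d\mu$, relative to which every element of $W^{*}(N)'$, in particular $T$, is decomposable: $T=\int^{\oplus}T_{\lambda}\,d\mu$, so $T_{\lambda}^{2}=\lambda I_{\lambda}$ for $\mu$-a.e. $\lambda$, and $\mu(\{0\})=0$ since $N$ is injective. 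Fix a Borel branch $\lambda\mapsto\lambda^{1/2}$ on $\sigma(N)\setminus\{0\}$; then $W_{\lambda}:=\lambda^{-1/2}T_{\lambda}$ is a bounded involution for a.e. $\lambda$. Now I use the structure of involutions: $P_{\lambda}:=\tfrac12(I+W_{\lambda})$ is idempotent, so with respect to $\mathcal{H}_{\lambda}=\mathcal{R}(P_{\lambda})\oplus\mathcal{R}(P_{\lambda})^{\perp}$ one has $W_{\lambda}=\left[\begin{smallmatrix}I&2Z_{\lambda}\\0&-I\end{smallmatrix}\right]$; the subspaces $\mathcal{N}(Z_{\lambda})$ and $\mathcal{R}(P_{\lambda})\ominus\overline{\mathcal{R}(Z_{\lambda})}$ reduce $W_{\lambda}$ (acting as $-I$, resp. $I$), while $Z_{\lambda}$ restricts to a quasi-affinity $\overline{\mathcal{R}(Z_{\lambda}^{*})}\to\overline{\mathcal{R}(Z_{\lambda})}$, and using the unitary factor of its polar decomposition to identify those two spaces turns $W_{\lambda}$ on the ``core'' $\overline{\mathcal{R}(Z_{\lambda})}\oplus\overline{\mathcal{R}(Z_{\lambda}^{*})}$ into $\left[\begin{smallmatrix}I&2|Z_{\lambda}|\\0&-I\end{smallmatrix}\right]$ with $|Z_{\lambda}|\geq 0$ injective. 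Multiplying back by $\lambda^{1/2}$ and rotating one core half by a unimodular constant to make the off-diagonal entry positive gives, for a.e. $\lambda$, a unitary identification $T_{\lambda}\cong(\lambda^{1/2}I)\oplus(-\lambda^{1/2}I)\oplus\left[\begin{smallmatrix}\lambda^{1/2}I&2|\lambda|^{1/2}|Z_{\lambda}|\\0&-\lambda^{1/2}I\end{smallmatrix}\right]$, and a compression estimate bounds $\norm{\,|\lambda|^{1/2}|Z_{\lambda}|\,}$ by $\tfrac12\norm{T_{\lambda}}\leq\tfrac12\norm{T}$, keeping all fibres uniformly bounded.

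Reassembling, the first two fibrewise summands patch into a direct integral of normal operators, hence a normal $A$, and the core summands patch into $\left[\begin{smallmatrix}B&C\\0&-B\end{smallmatrix}\right]$ on $\mathcal{H}_{B}\oplus\mathcal{H}_{B}$, where $B=\int^{\oplus}\lambda^{1/2}I_{\lambda}\,d\mu$ is multiplication by a bounded Borel function (hence normal), $C=\int^{\oplus}2|\lambda|^{1/2}|Z_{\lambda}|\,d\mu$ is positive, bounded by the above estimate, and injective since each fibre is, and $BC=CB$ because $B_{\lambda}$ is scalar on every fibre. Folding in $T_{0}$ — its $\left[\begin{smallmatrix}0&C_{0}\\0&0\end{smallmatrix}\right]$-part into the $B$-block (enlarging $B$ by a zero summand and $C$ by $C_{0}$, which preserves normality, positivity, injectivity and commutation) and its $0$-part into $A$ — yields $T=A\oplus\left[\begin{smallmatrix}B&C\\0&-B\end{smallmatrix}\right]$ in exactly the asserted form.

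The step I expect to be the genuine obstacle is measurability. For the reassembly to be legitimate one must check that $\lambda\mapsto$ (the orthogonal projection onto $\mathcal{R}(P_{\lambda})$), and then the projections onto $\mathcal{N}(Z_{\lambda})$, $\overline{\mathcal{R}(Z_{\lambda})}$ and $\overline{\mathcal{R}(Z_{\lambda}^{*})}$, the modulus $|Z_{\lambda}|$, and the fibrewise unitaries identifying the two core halves, are all measurable fields, so that $A$, $B$, $C$ and the ambient splitting of $\mathcal{H}$ are honest decomposable objects. This is precisely what a measurable functional calculus and a measurable polar decomposition over a standard measure space supply — the separability of $\mathcal{H}$, assumed throughout the surrounding discussion, placing us in that setting — while the uniform bounds noted above guarantee the assembled operators are bounded. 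The remaining ingredients (the elementary structure theory of a single involution and of a single square-zero operator, used fibrewise, and the mutual orthogonality of the subspaces collected into $A$) are routine.
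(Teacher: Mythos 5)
The paper does not prove this statement at all: it is imported verbatim (in ``slightly different form'') from Radjavi and Rosenthal \cite{RadjaviRosenthal71}, so there is no internal proof to compare yours against. Judged on its own, your argument is essentially sound as an outline, and it is a genuinely different route from the classical one. Radjavi and Rosenthal's proof is direct-integral-free: after splitting off $\mathcal{N}(T^2)$ one picks a normal square root $M=f(N)$ of $N=T^2$ via a Borel branch of the square root, notes that $T$ commutes with $M$ by Fuglede, so that $(M-T)(M+T)=M^2-T^2=0$, and then builds the decomposition out of the subspaces $\mathcal{N}(M-T)$, $\mathcal{N}(M+T)$ and their complements, where $T$ acts as $M$, as $-M$, and as the two-by-two corner, respectively. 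That route works on an arbitrary Hilbert space and replaces all of your fibrewise involution analysis by one algebraic identity; your route, by contrast, is consistent with the machinery the paper actually uses in Theorem \ref{thm:main_k_quasi}, and your fibrewise steps (the idempotent $P_\lambda=\tfrac12(I+W_\lambda)$, the reduction by $\mathcal{N}(Z_\lambda)$ and $\mathcal{R}(P_\lambda)\ominus\overline{\mathcal{R}(Z_\lambda)}$, the polar-decomposition identification of the two core halves, the unimodular rotation making $C$ positive, and the compression bound) are all correct.

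Two caveats keep this from being a complete proof of the statement as written. First, the theorem is stated for arbitrary $\mathcal{H}$, while direct integral theory requires separability; you acknowledge leaning on the separability ``assumed throughout the surrounding discussion,'' but that hypothesis belongs to the neighbouring Theorem \ref{thm:main_k_quasi}, not to this one. The gap is easily closed — decompose $\mathcal{H}$ into an orthogonal family of separable subspaces reducing $T$ and observe that the normal form $A\oplus\left[\begin{smallmatrix}B&C\\0&-B\end{smallmatrix}\right]$ is closed under arbitrary direct sums with all side conditions preserved — but you should say so, or switch to the $M\pm T$ argument, which avoids the issue entirely. Second, the measurability of the fields $\lambda\mapsto P_\lambda$, $\lambda\mapsto|Z_\lambda|$, the range and kernel projections, and the identifying partial isometries is the real content of your reassembly step; you correctly identify it as the obstacle but only assert that standard machinery supplies it. It does (each object is obtained from $T_\lambda$ by Borel functional calculus or as a strong limit of such, e.g.\ the isometric factor as $\lim_{\varepsilon\to0}Z_\lambda(Z_\lambda^*Z_\lambda+\varepsilon)^{-1/2}$), but a complete write-up must record these formulas, since without them the claim that $A$, $B$, $C$ are well-defined bounded operators on honest direct-integral subspaces is unverified.
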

		For other related results regarding the structure of the $n$-th roots of normal operators, we refer the reader to \cite{Duggal93, DuggalKim20, Kerchy95, Kurepa62, Putnam57}.
		
		\begin{theorem}
			Let $\mathcal{H}$ be a separable Hilbert space and let $T\in\mathfrak{B}(\mathcal{H})$ be a $k$-quasi-paranormal operator for some $k\in\mathbb{N}$ such that $T^2$ is normal. Con\-sider the decomposition in Theorem \ref{thm:main_k_quasi}. If the nilpotent part $T''$ is pure, then it is given by 
			\begin{equation}\label{eq:nilpotent_2_rep}
				T''= \begin{bmatrix}
					0&C\\
					0&0
				\end{bmatrix},
			\end{equation}
			with $C$ being an injective positive operator.
		\end{theorem}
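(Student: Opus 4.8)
The plan is to apply the Radjavi--Rosenthal representation of square roots of normal operators (Theorem~\ref{thm:normal_square}) directly to the nilpotent summand $T''$, and then to exploit the fact that $T''$ has nil-index at most $2$ together with purity in order to collapse that representation to the asserted $2\times 2$ form.

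First I would record that $(T'')^2$ is normal. Writing $T=T'\oplus T''$ as in Theorem~\ref{thm:main_k_quasi}, both summands reduce $T$, so $T^2=(T')^2\oplus(T'')^2$; since $T^2$ is normal and the restriction of a normal operator to a reducing subspace is again normal, $(T'')^2$ is normal. Hence $T''$ is a square root of a normal operator, and Theorem~\ref{thm:normal_square} provides a decomposition of the underlying space as $\mathcal{H}_A\oplus(\mathcal{K}\oplus\mathcal{K})$ on which
\begin{equation*}
	T''=A\oplus\begin{bmatrix} B & C\\ 0 & -B\end{bmatrix},
\end{equation*}
with $A,B$ normal, $C\geq 0$ injective, and $BC=CB$.

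Next I would bring in the nilpotency. By Theorem~\ref{thm:main_k_quasi} applied with $n=2$, the nil-index of $T''$ is at most $\min\{2,k+1\}=2$, so $(T'')^2=0$. Squaring the block above and using $BC=CB$ to annihilate the off-diagonal entry gives $(T'')^2=A^2\oplus\begin{bmatrix} B^2 & 0\\ 0 & B^2\end{bmatrix}=0$, hence $A^2=0$ and $B^2=0$. Since a normal operator is paranormal, $A^2=0$ forces $\|Ax\|^2\le\|A^2x\|\,\|x\|=0$ for every $x$, so $A=0$, and likewise $B=0$. Therefore $T''=0\oplus\begin{bmatrix} 0 & C\\ 0 & 0\end{bmatrix}$.

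Finally, the summand $\mathcal{H}_A$ on which $T''$ acts as $A=0$ is a reducing subspace on which $T''$ is trivially normal; since $T''$ is assumed pure, this subspace must be $\{0\}$, so the representation reduces to $T''=\begin{bmatrix} 0 & C\\ 0 & 0\end{bmatrix}$ with $C\geq 0$ injective, which is exactly the assertion. I do not anticipate any real obstacle here: the structural content is carried entirely by Theorem~\ref{thm:normal_square}, and the only two points that need a word of justification --- that a normal operator with vanishing square is zero, and that the $A$-part is forced to be absent by purity --- are both immediate.
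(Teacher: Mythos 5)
Your proposal is correct and follows essentially the same route as the paper: apply the Radjavi--Rosenthal representation (Theorem~\ref{thm:normal_square}), use the nil-index bound from Theorem~\ref{thm:main_k_quasi} with $n=2$ to get $B^2=0$ (and $A^2=0$), and conclude from normality that these summands vanish. The only difference is that you retain the normal summand $A$ and eliminate it explicitly via purity (and verify that $(T'')^2$ is normal), steps the paper leaves implicit, so your write-up is if anything slightly more complete.
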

		
		\begin{proof}
			According to Theorem \ref{thm:normal_square}, we have that $$	T''= \begin{bmatrix}
				B&C\\
				0&-B
			\end{bmatrix},$$ where $C$ is a positive operator commuting with $B$, and $B$ is normal. By Theorem \ref{thm:main_k_quasi}, $T''$ is nilpotent of index at most 2. In other words,
			$$	{T''}^2= \begin{bmatrix}
				B^2&0\\
				0&B^2
			\end{bmatrix}=0,$$
			from where it follows that $B^2=0$. Since the  only nilpotent normal operator is the zero operator, we conclude that $B=0$, showing that $T''$ is given by \eqref{eq:nilpotent_2_rep}.
		\end{proof}
		
		\medskip 
		
		In order to prove Theorem \ref{thm:main_k_paranormal}, we first need the following two results. 
		
		\begin{lemma}\label{lem:k-char}
			Let $T\in\mathfrak{B}(\mathcal{H})$ and $k\in\mathbb{N}$. 
			\begin{enumerate}[$(i)$]
				\item  $T$ is a $k$-paranormal if and only if
				\begin{equation}\label{eq:k-char_ineq}
					T^{*k+1}T^{k+1}-(k+1)\lambda^k T^*T+k\lambda^{k+1}\geq 0,
				\end{equation}
				for each $\lambda>0$.
				\item  $T$ is a absolute-$k$-paranormal if and only if
				\begin{equation}\label{eq:k-char_ineq}
					T^*(T^*T)^kT-(k+1)\lambda^k T^*T+k\lambda^{k+1}\geq 0,
				\end{equation}
				for each $\lambda>0$.
			\end{enumerate}
		\end{lemma}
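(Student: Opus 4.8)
The plan is to pass from the two operator inequalities to families of scalar inequalities indexed by $\lambda$ and to minimize each one in $\lambda$. The two parts of the lemma are structurally identical: since $\skal{T^*(T^*T)^kTx}{x} = \skal{(T^*T)^kTx}{Tx} = \norm{\,|T|^kTx}^2$ for every $x\in\mathcal{H}$, the absolute-$k$-paranormal case is obtained from the $k$-paranormal case by replacing the vector $T^{k+1}x$ by $|T|^kTx$ throughout. So I would carry out the argument once, for $k$-paranormality, and note that the other case is verbatim.

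First I would observe that the inequality in $(i)$ holds for all $\lambda>0$ if and only if, after pairing with $x$,
\[
f_x(\lambda) := \norm{T^{k+1}x}^2 - (k+1)\lambda^k\norm{Tx}^2 + k\lambda^{k+1}\norm{x}^2 \;\geq\; 0
\]
for every $x\in\mathcal{H}$ and every $\lambda>0$; since these two quantifiers commute, the task reduces to deciding, for each fixed $x$, when $f_x(\lambda)\geq 0$ on $(0,\infty)$. By homogeneity I may take $\norm{x}=1$. If $Tx=0$ then also $T^{k+1}x=0$, so $f_x(\lambda)=k\lambda^{k+1}\geq 0$ and the $k$-paranormal inequality at $x$ is the trivial $0\leq 0$; this degenerate case is harmless. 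If $Tx\neq 0$, then
\[
f_x'(\lambda) = k(k+1)\lambda^{k-1}\bigl(\lambda-\norm{Tx}^2\bigr)
\]
changes sign from negative to positive at $\lambda_0=\norm{Tx}^2>0$, so $f_x$ attains its minimum over $(0,\infty)$ at $\lambda_0$, and a short computation gives $f_x(\lambda_0)=\norm{T^{k+1}x}^2-\norm{Tx}^{2(k+1)}$. Hence $f_x\geq 0$ on $(0,\infty)$ is equivalent to $\norm{T^{k+1}x}^2\geq\norm{Tx}^{2(k+1)}$, i.e. (restoring $\norm{x}$) to $\norm{Tx}^{k+1}\leq\norm{T^{k+1}x}\norm{x}^k$.

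Putting the pieces together: if the inequality in $(i)$ holds for all $\lambda>0$ then in particular $f_x(\lambda_0)\geq 0$ for each $x$, which is precisely $k$-paranormality \eqref{eq:k-para_def}; conversely, $k$-paranormality gives $f_x(\lambda_0)\geq 0$, and since $\lambda_0$ is the global minimizer of $f_x$ on $(0,\infty)$ we conclude $f_x(\lambda)\geq 0$ for every $\lambda>0$ and every $x$, i.e. the inequality in $(i)$. For part $(ii)$ I would repeat the computation with $\norm{|T|^kTx}$ in place of $\norm{T^{k+1}x}$ and the definition of absolute-$k$-paranormality from Definition \ref{eq:main_def} in place of \eqref{eq:k-para_def}. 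I do not anticipate a genuine obstacle: the heart of the matter is a one-variable calculus minimization — equivalently, the ``only if'' direction is just the weighted arithmetic--geometric mean inequality with weights $\tfrac1{k+1}$ and $\tfrac{k}{k+1}$ applied to $\norm{T^{k+1}x}^2$ and $\lambda^{k+1}\norm{x}^2$. The only points that need care are the degenerate vectors $x=0$ and $Tx=0$ and the (routine) interchange of the quantifiers over $x$ and $\lambda$. Characterizations of exactly this form are already known for absolute-$k$-paranormal operators \cite{FurutaItoYamazaki98}, and the statement above is the natural counterpart.
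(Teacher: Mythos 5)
Your proof is correct and essentially the same as the paper's: both reduce the operator inequality to the scalar family $f_x(\lambda)\geq 0$ and optimize over $\lambda$, the paper via Young's (weighted AM--GM) inequality for the forward direction and the choice $\lambda=\bigl(\skal{T^{*k+1}T^{k+1}x}{x}/\skal{x}{x}\bigr)^{1/(k+1)}$ for the converse, while you locate the equivalent minimizer $\lambda_0=\norm{Tx}^2$ by calculus. The only cosmetic difference is that the paper proves only part $(i)$ and cites \cite{FurutaItoYamazaki98} for part $(ii)$, whereas your verbatim substitution $\norm{T^{k+1}x}\mapsto\norm{|T|^kTx}$ handles both, which is fine.
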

		\begin{proof} The proof of the second part can be found in \cite{FurutaItoYamazaki98}. Thus, we only prove $(i)$. 
			
			Let $x\in\mathcal{H}$ and $\lambda>0$ be arbitrary. From \eqref{eq:k-para_def}, we have that
			\begin{align*}
				\skal{T^*Tx}{x}&\leq \skal{T^{*k+1}T^{k+1}x}{x}^\frac{1}{k+1}\skal{x}{x}^{\frac{k}{k+1}}\\
				&=\left[\left(\frac{1}{\lambda}\right)^k\skal{T^{*k+1}T^{k+1}x}{x}\right]^\frac{1}{k+1}\left[\lambda\skal{x}{x}\right]^{\frac{k}{k+1}}.
			\end{align*}  Using Young's inequality for real numbers, this further implies
			\begin{equation}\label{eq:k-char_x}
				\skal{T^*Tx}{x}\leq \frac{1}{k+1}\frac{1}{\lambda^k}\skal{T^{*k+1}T^{k+1}x}{x}+\frac{k}{k+1}\lambda\skal{x}{x},
			\end{equation}
			and thus,
			\begin{equation*}
				T^{*k+1}T^{k+1}-(k+1)\lambda^k T^*T+k\lambda^{k+1}\geq 0.
			\end{equation*}
			By taking $\lambda=\left(\dfrac{\skal{T^{*k+1}T^{k+1}x}{x}}{\skal{x}{x}}\right)^\frac{1}{k+1}$ in \eqref{eq:k-char_x} (if $\skal{T^{*k+1}T^{k+1}x}{x}=0$, we let $\lambda\to 0$), we see that \eqref{eq:k-para_def} holds.  
		\end{proof}

		\begin{lemma}\label{lem:root_of_scalar}
			Let $T\in\mathfrak{B}(\mathcal{H})$ be a $k$-paranormal or absolute-$k$-paranormal operator for some $k\in\mathbb{N}$. If there exists $n\in\mathbb{N}$ and $\lambda\in\mathbb{C}$ such that $T^n=\lambda I$, then $T$ is normal.
		\end{lemma}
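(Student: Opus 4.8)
The plan is to reduce the whole statement to one structural fact: both $k$-paranormal and absolute-$k$-paranormal operators are normaloid, by the inclusion chains \eqref{eq:k-para_chain} and \eqref{eq:abs-k-para_chain}, so that $\norm{T^m}=\norm{T}^m$ for every $m\in\mathbb{N}$. Somewhat pleasantly, Lemma \ref{lem:k-char} is not needed in this argument; only normaloidness and the hypothesis $T^n=\lambda I$ enter. First I would dispose of the degenerate case $\lambda=0$: then $T^n=0$, so $\norm{T}^n=\norm{T^n}=0$ and hence $T=0$, which is normal.

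Assume now $\lambda\neq 0$. Then $T^n=\lambda I$ is invertible, hence so is $T$, and normaloidness gives $\norm{T}^n=\norm{T^n}=|\lambda|$, i.e.\ $\norm{T}=|\lambda|^{1/n}$. The key step is to promote this norm identity to the pointwise identity $\norm{Tx}=|\lambda|^{1/n}\norm{x}$ for all $x\in\mathcal{H}$. To do so I would fix a unit vector $x$ and consider the chain
\[
|\lambda|=\norm{T^{n}x}\le\norm{T}\,\norm{T^{n-1}x}\le\norm{T}^{2}\,\norm{T^{n-2}x}\le\cdots\le\norm{T}^{n}\,\norm{x}=|\lambda|.
\]
Since the chain starts and ends at $|\lambda|$, every one of its terms equals $|\lambda|$; in particular $\norm{T}^{n-1}\,\norm{Tx}=|\lambda|$, so $\norm{Tx}=|\lambda|/\norm{T}^{n-1}=|\lambda|^{1/n}$. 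As $x$ was an arbitrary unit vector, this yields $T^{*}T=|\lambda|^{2/n}I$.

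Finally, invertibility of $T$ turns $T^{*}T=|\lambda|^{2/n}I$ into $T^{*}=|\lambda|^{2/n}T^{-1}$, whence $TT^{*}=|\lambda|^{2/n}I=T^{*}T$ and $T$ is normal. I expect the only genuinely substantive points to be the equality-forcing step in the displayed chain, together with the elementary observation that an invertible operator whose $T^{*}T$ is a positive scalar multiple of the identity is automatically normal; everything else is bookkeeping. This lemma is tailored for the direct-integral fibers $T_\lambda$ in the proof of Theorem \ref{thm:main_k_paranormal}: there the case $\lambda=0$ now forces $T_\lambda=0$ outright rather than merely nilpotent, which is precisely why Theorem \ref{thm:main_k_paranormal} can conclude full normality, in contrast with Theorem \ref{thm:main_k_quasi}.
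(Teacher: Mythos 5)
Your proof is correct, but it takes a genuinely different route from the paper's. The paper also begins with normaloidness to obtain $\norm{T}=\aps{\lambda}^{1/n}$, but it then invokes the defining inequalities \eqref{eq:k-para_def} and \eqref{eq:abs-k-para_def} applied to the vector $T^{n-1}x$, producing a chain of the form $\aps{\lambda}^{k+1}\leq\aps{\lambda}\norm{T^kx}\aps{\lambda}^{k(n-1)/n}\leq\cdots\leq\aps{\lambda}^{k+1}$ whose forced equalities yield $\norm{Tx}=\aps{\lambda}^{1/n}$; it then concludes via $T^*=\aps{\lambda}^{2/n}\lambda^{-1}T^{n-1}$. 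You bypass the defining inequalities entirely: your chain $\aps{\lambda}=\norm{T^nx}\leq\norm{T}\norm{T^{n-1}x}\leq\cdots\leq\norm{T}^n=\aps{\lambda}$ uses only submultiplicativity of the norm together with $\norm{T^n}=\norm{T}^n$, and each step checks out (the equality-forcing argument and the division by $\norm{T}^{n-1}\neq 0$ are both legitimate since $\lambda\neq 0$). What your argument buys is a strictly more general statement: every \emph{normaloid} operator satisfying $T^n=\lambda I$ is normal --- indeed $T/\aps{\lambda}^{1/n}$ is an invertible isometry, hence unitary --- so $k$-paranormality and absolute-$k$-paranormality enter only through the inclusion chains \eqref{eq:k-para_chain} and \eqref{eq:abs-k-para_chain}. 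This also cleanly separates the lemma from the paper's earlier example showing that normaloidness plus $T^n$ normal does not suffice: there $T^n$ is normal but not scalar. What the paper's computation buys is mainly self-containedness at the level of the definitions (it exhibits exactly how the paranormality-type inequality saturates on each vector), but it proves nothing beyond your version; your closing remark about the fibers $T_\lambda$ and the $\lambda=0$ case is likewise accurate.
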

		
		\begin{proof}
			Since \(T\) is normaloid in both cases, it follows that
			\[
			\norm{T}^n = \norm{T^n} = |\lambda|,
			\]
			which implies \(\norm{T} = |\lambda|^{\frac{1}{n}}\). If \(\lambda = 0\), the result is immediate. Thus, assume \(\lambda \neq 0\) and let \(x \in \mathcal{H}\) be an arbitrary unit vector. 
			
			First consider the case when $T$ is $k$-paranormal.
			By taking $T^{n-1}x$ instead of $x$ in \eqref{eq:k-para_def}, we get
			\begin{align*}
				\aps{\lambda}^{k+1}&=\norm{T^nx}^{k+1}\leq\norm{T^{n+k}x}\norm{T^{n-1}x}^k\\
				&=\aps{\lambda}\norm{T^{k}x}\aps{\lambda}^{\frac{k(n-1)}{n}}\\
				&\leq \aps{\lambda}^{k+1-\frac{k}{n}}\norm{T}^{k-1}\norm{Tx}\\
				&=\aps{\lambda}^{k+1-\frac{1}{n}}\norm{Tx}\\
				&\leq \aps{\lambda}^{k+1},
			\end{align*}
			and thus, $\norm{Tx}=\aps{\lambda}^{\frac{1}{n}}$. Therefore, $T^*T=\aps{\lambda}^{\frac{2}{n}}$. 
			
			Now assume that $T$ is absolute-$k$-paranormal. In a similar fashion, using \eqref{eq:abs-k-para_def}, we have
			
			\begin{align*}
				\aps{\lambda}^{k+1}&=\norm{T^nx}^{k+1}\leq\norm{|T|^kT^nx}\norm{T^{n-1}x}^k\\
				&=\aps{\lambda}\norm{|T^{k}|x}\aps{\lambda}^{\frac{k(n-1)}{n}}\\
				&=\aps{\lambda}\norm{T^{k}x}\aps{\lambda}^{\frac{k(n-1)}{n}}\\
				&\leq \aps{\lambda}^{k+1},
			\end{align*}
			and again, $T^*T=\aps{\lambda}^{\frac{2}{n}}$.
			
			In both cases, by multiplying this equality from the right by \(T^{n-1}\), we obtain
			\[
			T^* = |\lambda|^{\frac{2}{n}} \lambda^{-1} T^{n-1}.
			\]
			This clearly implies the normality of \(T\).
		\end{proof}

		\begin{proof}[Proof of Theorem \ref{thm:main_k_paranormal}]
			We only present a proof in the case when $T$ is $k$-paranormal, as the second case can be proved analogously. 
			
			First assume that the Hilbert space $\mathcal{H}$ is separable and let us use the same notation as in the proof of Theorem \ref{thm:main_k_quasi}. Let $\zeta>0$ be arbitrary. Since $T$ is $k$-paranormal, it satisfies \eqref{eq:k-char_ineq}, and thus,
			\begin{equation*}
				\int_{\sigma(T^n)}^\oplus\left[T_\lambda^{*k+1}T_\lambda^{k+1}-(k+1)\zeta^k T_\lambda^*T_\lambda+k\zeta^{k+1}\right]\,d\mu(\lambda)\geq 0.
			\end{equation*}
			By \cite[Proposition 2.6.1]{Nielsen80}, we have that
			\begin{equation*} 
				T_\lambda^{*k+1}T_\lambda^{k+1}-(k+1)\zeta^k T_\lambda^*T_\lambda+k\zeta^{k+1}\geq 0,\quad \zeta>0,
			\end{equation*}
			for $\mu$-almost all $\lambda\in\sigma(T^n)$. In other words, $T_\lambda$ is $k$-paranormal for $\mu$-almost all $\lambda\in\sigma(T^n)$, by Lemma \ref{lem:k-char}. Lemma \ref{lem:root_of_scalar} now implies that $T_\lambda$ is normal for $\mu$-almost all $\lambda\in\sigma(T^n)$. This shows that $T$ is also normal.
			
			We now turn to the general case where $\mathcal{H}$ is not necessarily separable. Our approach is inspired by techniques from \cite[Lemma 3.1]{Pietrzycki18}. Let $x\in\mathcal{H}$ be arbitrary. Define $\mathcal{H}_x\subseteq\mathcal{H}$ as
			\begin{equation*}
				\mathcal{H}_x:=\bigvee\left\{{A^*}^{i_k}A^{j_k}\cdots{A^*}^{i_1}A^{j_1}x:\, (i_1,\ldots,i_k),(j_1,\ldots,j_k)\in\mathbb{N}_0^k,\, k\in\mathbb{N}\right\},
			\end{equation*}
			where $\bigvee \mathcal{S}$ denotes the closure of the linear span of vectors in a set $\mathcal{S}\subseteq\mathcal{H}$. Then, it is easy to see that $\mathcal{H}_x$ is a separable Hilbert space and it reduces $T$. Let $T_x:=T\restriction_{\mathcal{H}_x}$. Note that $T_x$ satisfies \eqref{eq:quasi_paranormal_def} for all $y\in\mathcal{H}_x$. Thus, $T_x$ is $k$-quasi-paranormal and $T_x^n$ is normal. Consequently, $T_x$ is normal, and 
			\begin{equation*}
				TT^*x=T_xT^*_xx=T^*_xT_xx=T^*Tx. 
			\end{equation*}
			Since $x\in\mathcal{H}$ was arbitrary, we conclude that $TT^*=T^*T$, i.e., $T$ is normal.
		\end{proof}

		\begin{remark}
			It is clear that Theorem \ref{thm:ando_thm} appears as a special case of Theorem \ref{thm:main_k_paranormal} when $k=1$. Also, the examination of the previous proof naturally leads to the question of whether the separability assumption in Theorem \ref{thm:main_k_quasi} can be dropped.
		\end{remark}

		We conclude the paper with the following result concerning normal powers of operators.

		\begin{corollary}\label{cor:coprime}
			Let $T\in\mathfrak{B}(\mathcal{H})$ be an invertible operator and let $k\in\mathbb{N}$. If $T^m$ is $k$-paranormal or absolute-$k$-paranormal, and $T^n$ is normal, where $m,n\geq 2$ are coprime integers, then $T$ is normal. 
		\end{corollary}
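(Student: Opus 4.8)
The plan is to reduce to the situation where \emph{both} $T^m$ and $T^n$ are known to be normal, and then to extract normality of $T$ itself from coprimality together with invertibility.

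First I would note that $T^{mn}=(T^n)^m$ is a power of the normal operator $T^n$, hence normal. Rewriting $T^{mn}=(T^m)^n$, we see that $T^m$ is a $k$-paranormal (respectively absolute-$k$-paranormal) operator whose $n$-th power is normal, so Theorem \ref{thm:main_k_paranormal} applies and gives that $T^m$ is normal. At this point $T^m$ and $T^n$ are both normal, and $T$ is invertible.

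Next, since $T$ commutes with each of its powers, $T^mT=TT^m$ and $T^nT=TT^n$; applying the Fuglede--Putnam theorem to the normal operators $T^m$ and $T^n$ yields $(T^*)^mT=T(T^*)^m$ and $(T^*)^nT=T(T^*)^n$, that is, $(T^*)^m$ and $(T^*)^n$ both lie in the commutant $\{T\}'$. Because $T$ is invertible, so is $T^*$, and $\{T\}'$ is a unital subalgebra of $\mathfrak{B}(\mathcal{H})$ closed under taking inverses of its invertible elements; hence every integer power of $(T^*)^m$ and of $(T^*)^n$ lies in $\{T\}'$. Picking integers $a,b$ with $am+bn=1$ (possible since $\gcd(m,n)=1$), and using that powers of $T^*$ commute with one another, we obtain
\[
T^*=(T^*)^{am+bn}=\bigl((T^*)^m\bigr)^{a}\,\bigl((T^*)^n\bigr)^{b}\in\{T\}',
\]
so $T^*T=TT^*$ and $T$ is normal.

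The real content is already carried by Theorem \ref{thm:main_k_paranormal}; the only place that needs a little care is the final passage from ``$T^m$ and $T^n$ normal'' to ``$T$ normal'', where both the coprimality (to write $1=am+bn$) and the invertibility of $T$ (so that negative powers of $T^*$ make sense and stay in the commutant) are genuinely used — indeed the statement is false without invertibility, as any nonzero square-zero operator shows. Beyond bookkeeping these hypotheses I do not anticipate a real obstacle.
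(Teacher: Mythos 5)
Your proposal is correct. The first half coincides exactly with the paper's proof: both observe that $(T^m)^n=(T^n)^m$ is normal and invoke Theorem \ref{thm:main_k_paranormal} to conclude that $T^m$ is normal. The only divergence is in the final step. The paper simply cites \cite[Theorem 2.21]{DehimiMortad23} for the implication ``$T^m$ and $T^n$ normal, $m,n$ coprime, $T$ invertible $\Rightarrow$ $T$ normal,'' whereas you prove it directly: Fuglede's theorem puts $(T^*)^m$ and $(T^*)^n$ in the commutant $\{T\}'$, invertibility of $T^*$ lets you take the integer powers $a,b$ from a B\'ezout relation $am+bn=1$ while staying in $\{T\}'$, and hence $T^*=\bigl((T^*)^m\bigr)^a\bigl((T^*)^n\bigr)^b$ commutes with $T$. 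This argument is sound and makes the corollary self-contained, at the modest cost of a few extra lines; your closing remark that invertibility is genuinely needed (a nonzero square-zero operator has all powers from the second on normal, yet is not normal) is also a worthwhile observation that the paper does not make explicit.
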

		
		\begin{proof}
			Since $T^n$ is normal, we have that $(T^m)^n=(T^n)^m$ is also normal. By Theorem \ref{thm:main_k_paranormal}, we conclude that $T^m$ is normal. The result now follows from \cite[Theorem 2.21]{DehimiMortad23}.
		\end{proof}

		\bigskip 
		\section*{Declarations}
		\noindent{\bf{Funding}}\\
		This work has been supported by the Ministry of Science, Technological Development and Innovation of the Republic of Serbia [Grant Number: 451-03-137/2025-03/200102].		
		
		\vspace{0.5cm}
		
		\noindent{\bf{Availability of data and materials}}\\
		\noindent No data were used to support this study.
		\vspace{0.5cm}\\
		\noindent{\bf{Competing interests}}\\
		\noindent The authors declare that they have no competing interests.
		\vspace{0.5cm}
		
		\noindent{\bf{Author contribution}}\\
		\noindent
		The work was a collaborative effort of all authors, who contributed equally to writing the article. All authors have read and approved the final manuscript.
		
		\vspace{0.5cm}
		
		
		
	\end{document}